\journalname{Neural Networks}
\newtheorem{thm}{Theorem}[section]
\newtheorem{defi}{Definition}[section]
\newtheorem{lem}{Lemma }[section]
\newtheorem{prop}{Proposition}[section]
\begin{document}

\begin{frontmatter}



\dochead{}

\title{Variational Approximation Error in Non-negative Matrix Factorization}


\author[n,t]{Naoki HAYASHI}

\address[n]{Simulation \& Mining Division\\
 NTT DATA Mathematical Systems Inc. \\
1F Shinanomachi Rengakan, 35, Shinanomachi, Shinjuku-ku, Tokyo, 160--0016, Japan\\
hayashi@msi.co.jp}

\address[t]{Department of Mathematical and Computing Science\\
 Tokyo Institute of Technology \\
Mail-Box W8--42, 2--12--1, Oookayama, Meguro-ku, Tokyo, 152--8552, Japan\\
hayashi.n.ag@m.titech.ac.jp}

\begin{abstract}
Non-negative matrix factorization (NMF) is a knowledge discovery method that is used in many fields. Variational inference and Gibbs sampling methods for it are also well-known.
However, the variational approximation error has not been clarified yet, because NMF is not statistically regular and the prior distribution used in variational Bayesian~NMF (VBNMF) has zero or divergence points.
In this paper, using algebraic geometrical methods, we theoretically analyze the difference in negative log evidence (a.k.a. {\it free energy}) between VBNMF and Bayesian~NMF, i.e., the Kullback-Leibler divergence between the variational posterior and the true posterior.
We derive an upper bound for the learning coefficient (a.k.a. the {\it real log canonical threshold}) in Bayesian~NMF.
By using the upper bound, we find a lower bound for the approximation error, asymptotically.
The result quantitatively shows how well the VBNMF algorithm can approximate Bayesian~NMF; the lower bound depends on the hyperparameters and the true non-negative rank. A numerical experiment demonstrates the theoretical result.
\end{abstract}

\begin{keyword}
non-negative matrix factorization (NMF) \sep real log canonical threshold (RLCT) \sep learning coefficient \sep Bayesian inference \sep variational Bayesian method \sep variational inference
\end{keyword}

\end{frontmatter}


\section{Introduction}
\subsection{Algorithms for Non-negative Matrix Factorization}
Non-negative matrix factorization (NMF) \cite{Paatero,Cemgil} has been applied to text mining \cite{Xu}, signal processing \cite{Lee}, bioinformatics \cite{Kim}, consumer analysis \cite{Kohjima}, and recommender systems \cite{Bobadilla2018recommender}.
NMF experiments discover knowledge and use it to make predictions about the future in the real world; however, the method suffers from many local minima and seldom reaches the global minimum.
Also, the results of numerical experiments have strongly depended on the initial values; a rigorous method has not yet been established.

To resolve these difficulties, Bayesian inference has been established for NMF \cite{Cemgil}.
It uses, as a numerical calculation of the Bayesian posterior distribution, the Gibbs sampling method, which is a kind of Markov chain Monte Carlo method (MCMC). Bayesian~NMF is more robust than the usual recursive methods of NMF, because it numerically expressed the posterior distribution; the parameters are subject to a probability distribution, and that makes it possible to determine the degree of fluctuation of the learning/inference result. As described later, the Bayesian method generally has higher estimation accuracy than maximum likelihood estimation and maximum posterior estimation if the model has a hierarchical structure or hidden variables, like in NMF.

On the other hand, a variational Bayesian algorithm (VB) for NMF has also been established \cite{Cemgil}, inspired by the mean-field approximation.
The variational Bayesian~NMF algorithm (VBNMF) is also more numerically stable than the usual recursive algorithms,
since VB approximates the Bayesian posterior distribution. Moreover, VBNMF is faster to compute than the usual Bayesian inference such as MCMC. However, its free energy (called the variational free energy) is larger than the Bayesian free energy, since VB raises the evidence lower bound (ELBO), but it is not the true model evidence. Note that the marginal likelihood is also called the model evidence, and the negative log ELBO is equal to the variational free energy.
From the above, it is important to clarify the approximation error of variational inference for not only theoretical reasons but also practical ones.

\subsection{Learning Theory of Bayesian and Variational Inference}
A statistical model is called regular if a function from a parameter set to a probability
density function set is one-to-one and if the likelihood function can be approximated by a Gaussian function.
It has been proven that if a statistical model is regular and if a true distribution is realizable by a statistical model, 
then the generalization error is asymptotically equal to $d/(2n)$,
where $d$, $n$, and the generalization error are the dimension of the parameter, the sample size, and
the expected Kullback-Leibler (KL) divergence of the true distribution and the estimated learning machine, respectively \cite{AkaikeAIC, SWatanabeBookMath}. 
Moreover, let $S_n$ be the empirical entropy; then, the negative log marginal likelihood or the free energy asymptotically behaves as $nS_n + (d/2) \log n + O_p(1)$ \cite{Schwarz1978BIC, SWatanabeBookMath}.
However, the statistical model used in NMF is not regular, because the map from a parameter to a probability
density function is not injective. As a result, its generalization error and free energy are still unknown. 

There are many non-regular statistical models in machine learning, for example,
reduced rank regressions,  normal mixture models, neural networks, hidden Markov models, and Boltzmann machines.
From a theoretical point of view, the free energy and the generalization error of a non-regular learning machine in Bayesian learning have been proved to be asymptotically equal to $nS_n + \lambda \log n$ and $\lambda/n$, respectively, where $\lambda$ is the real log canonical threshold  (RLCT) \cite{Watanabe1,Watanabe2}. Moreover, in non-regular cases, $\lambda < d/2$ holds, and $\lambda$ is also much less than the learning coefficients of maximum likelihood/posterior methods \cite{SWatanabeBookMath}.
The RLCTs for several learning machines have been studied. In fact, 
normal mixture models \cite{Yamazaki1}, 
reduced rank regressions \cite{Aoyagi1}, 
naive Bayesian networks \cite{Rusakov2005asymptotic}, 
hidden Markov models \cite{Yamazaki2}, 
Markov models \cite{Zwiernik2011asymptotic}, 
Gaussian latent tree and forest models \cite{Drton2017forest}, 
NMF \cite{nhayashi2, nhayashi5}, 
and latent Dirichlet allocation \cite{nhayashi7} 
have been studied by using resolution of singularities. A setting method for the temperature parameter of the replica Monte Carlo by using RLCTs has been studied \cite{Nagata2008asymptotic}. Moreover, a statistical model selection method {\it sBIC} using RLCTs has been proposed \cite{Drton}. 

On the other hand, for several statistical models, it was proved that the variational free energy asymptotically equals $nS_n + \lambda_{vb} \log n + O_p(1)$, where $\lambda_{vb}$ is a learning coefficient depending on the model;
examples include normal mixture models \cite{WatanabeK2006stochastic},
reduced rank regression \cite{Nakajima2007variational},
hidden Markov models \cite{HoshinoT2005hmmvb},
and NMF \cite{Kohjima2017phase}.
In general, the learning coefficient of VB may not be equal to the RLCT, but becomes an upper bound of it: $\lambda_{vb} \geqq \lambda$, since the variational free energy is larger than the free energy even if the sample size diverges to infinity.
Unfortunately, the variational generalization error is {\it not} asymptotically equal to $\lambda_{vb}/n$.
Moreover, variational inference seeks the mode of the true posterior,
and variational posterior distributions tend to concentrate in a fraction of the true posterior distributions.
One might say that a variational posterior distribution is equivalent to a posterior distribution; however, for these reasons, variational inference is only an approximation of Bayesian inference and is not equivalent.
As far as we know, there has been no direct theoretical comparison between them, except for \cite{Yamazaki2013comparing}.
The difference between VB and the local variational approximation, which is an approximation of VB, has been studied theoretically \cite{WatanabeK2012alternative}, but there have been few theoretical comparisons of Bayesian inference and VB. 


VBNMF has been devised \cite{Cemgil}, and the exact learning coefficient of VBNMF has been
derived \cite{Kohjima2017phase}. Nevertheless, the variational approximation error has not been found yet because the RLCT of NMF is still unknown. An upper bound of the RLCT of NMF has been proved if the prior distribution is a strictly and entirely positive bounded analytic function on the domain \cite{nhayashi2,nhayashi5}. Moreover, if the non-negative restriction is not assumed in the matrix factorization, the exact value of the RLCT is equal to that of the reduced rank regression models \cite{Aoyagi1}.
However, the RLCT is still unknown in the case of that the prior is a gamma distribution, which may be zero or infinite.

\subsection{Rest of the Paper}
In this paper, 
we theoretically give a lower bound for the KL divergence between the variational posterior and true posterior in NMF.
Let $\psi$ be the true posterior and $\psi_{vb}$ be the variational posterior. The KL divergence $\mathrm{KL}(\psi_{vb} \| \psi)$ is equal to $\overline{F}_n - F_n \approx (\lambda_{vb} - \lambda) \log n$. 
Hence, we theoretically derive a lower bound of the learning coefficient difference $\lambda_{vb} - \lambda$ in NMF, 
by which we can find a lower bound of the approximation error of VBNMF for Bayesian~NMF;
in this way, we can show how different the variational posterior is from the true posterior. 

The rest of this paper is organized as follows. 
In the second section, we briefly explain Bayesian inference and the variational Bayesian algorithm.
In the third section, we present the Main Theorems and sketches of their proofs.
In the fourth section, we discuss the results from theoretical application points of view and carry out numerical experiments aimed at verifying the numeric behavior of the Main Theorems.
The fifth section concludes this paper.
Rigorous proofs of the lemmas and theorems are in the appendices.

 \section{Inference Frameworks} 
 
 In this section, we explain the framework of Bayesian inference and the VB algorithm. 
 
 \subsection{Framework of Bayesian Inference} 
 
 First, we will briefly explain the general theory of Bayesian inference. 
 
Let $q(x)$ and $p(x|w)$ be probability density functions on a finite-dimensional real Euclidean space, where $w$ is a parameter;
$w$ is an element of a compact subset of a finite-dimensional real Euclidean space. 
In learning theory, $q(x)$ and $p(x|w)$ represent a {\it true distribution} and a {\it learning machine} with $w$ respectively. 
A probability density function on a set of parameters $\varphi(w)$ is called a {\it prior}. Usually, the prior has a parameter $\phi$ that is called a {\it hyperparameter}.
Let $X^n=(X_1,X_2,...,X_n)$ be a set of random variables that are independently subject to $q(x)$, where $n$ and 
$X^n$ are referred to as the {\it sample size} and {\it training data}, respectively. 
The {\it posterior} of $w$ is defined by
\[
\psi(w|X^n)=\frac{1}{Z_n}\varphi(w)\prod_{i=1}^n p(X_i|w) dw,
\]
where $Z_n$ is the normalizing constant that is determined by the condition $\int \psi(w|X^n)=1$. 
This constant $Z_n$ is called the {\it marginal likelihood, evidence, or partition function}, and it is also the probability density function of the training data: $Z_n = Z_n(X^n)$.
The Bayesian {\it predictive distribution} is defined by
\[
p(x|X^n)=\int p(x|w)\psi(w|X^n)dw. 
\]

The {\it negative log marginal likelihood} (or {\it free energy}) is defined by
\[
F_n = -\log Z_n = -\log \int \varphi(w)\prod_{i=1}^n p(X_i|w)dw.
\]
The generalization error $G_n$ is defined by the KL divergence from the true distribution $q(x)$ and
the predictive one $p(x|X^n)$:
\[
G_n=\int q(x)\log\frac{q(x)}{p(x|X^n)}dx.
\]
Note that $F_n$ and $G_n$ are functions of $X^n$; hence, they are also random variables. The expected value
of $G_n$ overall training data $\mathbb{E}[G_n]$ is called the expected generalization error, where the expectation $\mathbb{E}[\cdotp]$ is defined by
$$\mathbb{E}[\cdotp] = \int Q(x_1,\ldots,x_n) (\cdotp) dx_1 \ldots dx_n,$$
where
$$Q(X^n)=Q(X_1,\ldots,X_n)=\prod_{i=1}^n q(X_i).$$

Assume there exists a parameter $w_0$ that satisfies $q(x)=p(x|w_0)$. 
From singular learning theory \cite{Watanabe1, Watanabe2, SWatanabeBookMath}, it can be proven that 
\begin{align}
\label{WatanabeBayesF}
F_n &= nS_n + \lambda \log n -(m-1)\log\log n+ O_p(1), \\
\label{WatanabeBayesG}
\mathbb{E}[G_n] &=\frac{\lambda}{n}+o \left(\frac{1}{n} \right)
\end{align}
hold when $n$ tends to infinity,  even if the posterior distribution can {\it not} be approximated by
any normal distribution, where $S_n=-\frac{1}{n}\sum_{i=1}^n \log q(X_i)$ is the empirical entropy. The constant $\lambda$ is the {\it real log canonical threshold} (RLCT) which is
an important birational invariant in algebraic geometry. The constant $m$ is called the multiplicity and is also a birational invariant. From a mathematical point of view, RLCT is
characterized by the following property. We define a zeta function by
\[
\zeta(z)=\int K(w)^z\varphi(w)dw,
\]
where
\[
K(w)=\int q(x)\log\frac{q(x)}{p(x|w)}dx.
\]
This is holomorphic in $\mathrm{Re}(z)>0$ 
which can be analytically continued to a unique meromorphic function on the entire complex plane \cite{Atiyah1970resolution}. 
The poles of this extended function are all negative rational numbers. 
Let $(-\lambda)$ be the nearest pole to the origin; $\lambda$ is then equal to the RLCT. 
The multiplicity $m$ is denoted by the order of the nearest pole.
If the model is regular then $\lambda=d/2$ and $m=1$; however, NMF is not regular.

\subsection{Variational Bayesian Algorithm}

The variational Bayesian algorithm (VB), or variational inference, is an approximation method for Bayesian inference. VB is based on the mean-field approximation and is also called variational approximation. VB makes the cost of its numerical calculation is less than that of usual Bayesian inference.

Let the training data be $X^n=(X_1,\ldots,X_n)$ and the posterior be $\psi(w|X^n)=\psi(w^1,\ldots,w^d|X^n)$. In general, the posterior distribution cannot be found analytically; thus, we approximate the posterior $\psi(w|X^n)$ by the variational posterior defined by
$$\psi_{vb}(w|X^n):=\prod_{j=1}^d\psi^j_{vb}(w^j|X^n).$$


This approximation is performed by minimizing the KL divergence:
$$\min_{\psi_{vb}} \quad \mathrm{KL}(\psi_{vb}\| \psi)=\int \psi_{vb}(w|X^n)\log \frac{\psi_{vb}(w|X^n)}{\psi(w|X^n)}dw.$$
The problem of Bayesian inference is to find a numerical realization of the posterior and VB solves the problem by using the above optimization.
In practical cases, the parameters are often decomposed into parts, especially just two, and are assumed to be independent:
$$\psi(w|X^n) \approx \psi_{vb}(w|X^n)= \psi_{vb}^a(w_a|X^n) \psi_{vb}^b(w_b|X^n),$$
where $w=(w_a,w_b)$, $w_a=(w_{a_1},\dots,w_{a_k})$, $w_b=(w_{b_1},\dots,w_{b_{d-k}}).$

VB optimizes the above KL divergence by searching for $\psi_{vb}$, but the objective function
might be not able to be calculated analytically. This is because the marginal likelihood is contained in the KL divergence:
\begin{align*}
\mathrm{KL}(\psi_{vb}\| \psi)&=\int \psi_{vb}(w|X^n)\log \frac{\psi_{vb}(w|X^n)}{\psi(w|X^n)}dw \\
&=\int \psi_{vb}(w|X^n)\log \frac{\psi_{vb}(w|X^n)}{P(X^n|w)\varphi(w)/Z_n(X^n)}dw \\
&=\int \psi_{vb}(w|X^n)(\log\psi_{vb}(w|X^n) - \log P(X^n|w)\varphi(w))dw \\
&\quad + \int \psi_{vb}(w|X^n) \log Z_n(X^n)dw \\
&=\int \psi_{vb}(w|X^n)(\log\psi_{vb}(w|X^n) - \log P(X^n|w)\varphi(w))dw + \log Z_n(X^n),
\end{align*}
where $P(X^n|w)=\prod_{i=1}^n p(X_i|w)$.

The model $p(x|w)$ and the prior $\varphi(w)$ are designed manually, and the first term of the above KL divergence only contains known variables.
The minimization $\mathrm{KL}(\psi_{vb}\|\psi)$ problem is to minimize the following functional of $\psi_{vb}$:
$$\digamma(\psi_{vb}):=\int \psi_{vb}(w|X^n)(\log\psi_{vb}(w|X^n) - \log P(X^n|w)\varphi(w))dw.$$
If there exists $\hat{\psi}_{vb}$ such that $\hat{\psi}_{vb}=\psi$, then the KL divergence is equal to 0;
i.e., the functional is equal to the free energy: $\digamma(\hat{\psi}_{vb})=- \log Z_n(X^n) = F_n$.
Nevertheless, in general, there might not be any $\hat{\psi}_{vb}$ such that $\hat{\psi}_{vb}=\psi$.
The {\it variational free energy} $\overline{F}_n$ is defined by the minimum of $\digamma$:
$$\overline{F}_n = \min_{\psi_{vb}}\digamma(\psi_{vb}).$$
The inequality $\overline{F}_n \geqq F_n$ immediately follows from the definition.

For example, if one applies the approximation
$$\psi(w|X^n) \approx \psi_{vb}(w|X^n)= p_{vb}^a(w_a|X^n) p_{vb}^b(w_b|X^n),$$
then
\begin{align*}
\overline{F}_n &= \min_{\psi_{vb}=\psi^a_{vb}\psi^b_{vb}} \left\{ \int \psi_{vb}^a(w_a|X^n) \log \psi_{vb}^a(w_a | X^n) d w_a \right. \\
&\quad \left. + \int \psi_{vb}^b(w_b|X^n) \log \psi_{vb}^b(w_b|X^n) d w_b \right. \\
&\quad \left. + \iint \psi_{vb}^a(w_a|X^n) \psi_{vb}^b(w_b|X^n) \log  (P(X^n|w)\varphi(w)) d w_a d w_b \right\}.
\end{align*}

In addition, $\psi_{vb}^a$ and $\psi_{vb}^b$ satisfy the following {\it self-consistency condition}:
\begin{gather*}
\log \psi_{vb}^a(w_a|X^n){=}C_1{+}\int \! \psi_{vb}^b(w_b|X^n) \log \psi(w_a,w_b|X^n) d w_b, \\
\log \psi_{vb}^b(w_b|X^n){=}C_2{+}\int \! \psi_{vb}^a(w_a|X^n) \log \psi(w_a,w_b|X^n) d w_a,
\end{gather*}
where $C_1$ and $C_2$ are normalizing constants.

In this way, VB is an approximation of Bayesian inference and its accuracy can be expressed as $\overline{F}_n-F_n \geqq 0$;
the smaller the difference is, the more accurate the approximation is.
From a theoretical point of view, $\overline{F}_n$ has asymptotic behavior similar to that of $F_n$.
For example the following theorem has been proved for VBNMF \cite{Kohjima2017phase}.

\begin{thm}[Kohjima]\label{KohjimaVBNMF}
Let the elements of the data matrices $x_{ij}$ $(i=1,\ldots,M;$ $j=1,\ldots,N)$ be independently generated from
a Poisson distribution whose mean is equal to the $(i,j)$th element of $U_0V_0$, where the number of columns in $U_0$
$(= \mbox{the number of rows in } V_0)$ is $H_0$; $H_0$ is called the non-negative rank of $U_0V_0$ \cite{Cohen}. 

Let the likelihood model and the prior be the following Poisson and gamma distributions, respectively:
\begin{gather*}
p(X|U,V) = \prod_{i=1}^M \prod_{j=1}^N\frac{((UV)_{ij})^{x_{ij}}}{x_{ij}!}e^{-(UV)_{ij}}, \\
\varphi(U,V) = \mathrm{Gam}(U|\phi_U,\theta_U)\mathrm{Gam}(V|\phi_V,\theta_U),
\end{gather*}
where
\begin{gather*}
\mathrm{Gam}(U|\phi_U,\theta_U)=\prod_{i=1}^M\prod_{k=1}^H\frac{\theta_U^{\phi_U}}{\Gamma(\theta_U)}u_{ik}^{\phi_U-1}e^{-\theta_U u_{ik}}, \\
\mathrm{Gam}(V|\phi_V,\theta_V)=\prod_{k=1}^H\prod_{j=1}^N\frac{\theta_V^{\phi_V}}{\Gamma(\theta_V)}v_{ik}^{\phi_V-1}e^{-\theta_V v_{ik}},
\end{gather*}
are gamma distributions of the matrices $U$ and $V$, and $\phi_U, \theta_U, \phi_V, \theta_V > 0$ are hyperparameters,
where the size of $U$ and $V$ are $M \times H$ and $H \times N$, and $(UV)_{ij}$ is the $(i,j)$th entry of $UV$.

Assume the elements of $U_0V_0$ are strictly positive.
Then, the variational free energy $\overline{F}_n$ satisfies the following asymptotic equality:
$$\overline{F}_n = nS_n + \lambda_{vb} \log n + O_p(1) \ (n \rightarrow \infty),$$
where
$$\lambda_{vb} = \begin{cases}
(H-H_0)(M\phi_U+N\phi_V) + \frac{1}{2}H_0(M+N), & \mbox{if } M\phi_U + N\phi_V<\frac{M+N}{2} \\
\frac{1}{2}H(M+N), & \mbox{otherwise}.
\end{cases}$$
\end{thm}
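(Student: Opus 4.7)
The plan is to exploit Poisson--gamma conjugacy to reduce the infinite-dimensional mean-field optimization to a finite-dimensional one over variational hyperparameters, and then to extract the leading $\log n$ term by balancing a singular prior penalty against a regular Laplace-type fluctuation penalty.

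First, I would introduce the standard auxiliary hidden Poisson variables $S_{ijk}\sim\pois(u_{ik}v_{kj})$ with $x_{ij}=\sum_{k=1}^H S_{ijk}$; this augmentation turns the complete-data likelihood into a product of single-factor Poissons that is conjugate to the gamma priors. Applying the mean-field factorization $\psi_{vb}(U,V,S)=\psi_{vb}^U(U)\,\psi_{vb}^V(V)\,\psi_{vb}^S(S)$ and solving the self-consistency equations of Section~2, I would verify that $\psi_{vb}^U$ and $\psi_{vb}^V$ are forced to be products of gamma distributions with variational hyperparameters $(\bar\phi_{U,ik},\bar\theta_{U,ik})$ and $(\bar\phi_{V,kj},\bar\theta_{V,kj})$, while $\psi_{vb}^S$ is a product of multinomials. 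This collapses the minimization defining $\overline F_n$ to a finite-dimensional problem over these hyperparameters.

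Next, I would substitute the gamma forms into $\digamma(\psi_{vb})$ and, using $\mathbb E[u_{ik}]=\bar\phi_{U,ik}/\bar\theta_{U,ik}$ and $\mathbb E[\log u_{ik}]=\Psi(\bar\phi_{U,ik})-\log\bar\theta_{U,ik}$ (and similarly for $V$), split the resulting functional into a data-fit term and a sum of KL terms between the variational gammas and the gamma prior. Under the strict positivity assumption on $U_0V_0$, the data-fit part reduces to $nS_n+O_p(1)$ provided the variational means of the first $H_0$ component indices concentrate near $U_0$ and $V_0$, which pins the $nS_n$ contribution and isolates the $\log n$ behaviour inside the KL terms. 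I would then carry out the minimization of those KL contributions over the $H-H_0$ ``redundant'' component indices $k$. For each such $k$, two asymptotic strategies are available: either shrink the variational mean $\bar\phi_{U,ik}/\bar\theta_{U,ik}$ and $\bar\phi_{V,kj}/\bar\theta_{V,kj}$ to zero, which incurs a cost $(M\phi_U+N\phi_V)\log n$ coming from the singular $u^{\phi_U-1}$ and $v^{\phi_V-1}$ factors of the gamma prior near the origin, or keep the support nontrivial, which incurs the regular Laplace-type cost $\tfrac12(M+N)\log n$ from the $M+N$ Gaussian fluctuation dimensions of a well-supported component. Taking the component-wise minimum gives $(H-H_0)\min\{M\phi_U+N\phi_V,\,\tfrac12(M+N)\}$, and adding the unavoidable $H_0\cdot\tfrac12(M+N)$ contribution from the true components reproduces exactly the two-regime formula with phase boundary $M\phi_U+N\phi_V=\tfrac12(M+N)$.

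The main obstacle I expect is the rigorous exclusion of intermediate scalings: one must show that shrinking a redundant component at an intermediate rate $n^{-\alpha}$ with $0<\alpha<1$ cannot strictly beat the two endpoint strategies. This reduces to a convex one-parameter optimization in $\alpha$ whose minimum is always attained at $\alpha=0$ or $\alpha=1$, but the verification requires uniform control over the digamma contributions $\Psi(\bar\phi_{U,ik})$ and over the cross-interactions between distinct redundant components mediated through the auxiliary multinomial $\psi_{vb}^S$. A secondary subtlety is ensuring that the $O_p(1)$ data-fit remainder is uniform over the optimization domain of the variational hyperparameters, so that the $\log n$ coefficient is genuinely given by the prior--posterior KL minimum and is not contaminated by sample fluctuations.
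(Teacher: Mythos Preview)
The paper does not contain a proof of this statement. Theorem~\ref{KohjimaVBNMF} is quoted as a prior result due to Kohjima (reference \cite{Kohjima2017phase}); the paper merely states it and then uses it as a black box in the proof of Theorem~\ref{thm:BvsVB}. So there is no ``paper's own proof'' to compare your proposal against.

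That said, your outline is broadly the right shape for how such results are established in the variational free energy literature (Watanabe--Nakajima for reduced rank regression, Watanabe for mixtures, and Kohjima for NMF all follow this template): use conjugacy to force the mean-field optimum into a parametric family, reduce $\overline F_n$ to a finite-dimensional optimization over variational hyperparameters, and then read off the $\log n$ coefficient by comparing the ``shrink to zero'' cost against the ``regular Laplace'' cost on each redundant component. Your identification of the phase boundary $M\phi_U+N\phi_V=\tfrac12(M+N)$ as the crossover between these two strategies is exactly the mechanism behind the two-case formula.

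The genuine technical work you have not yet done is the matching \emph{lower} bound on $\overline F_n$. Your argument, as written, produces an upper bound: you exhibit a specific variational posterior (shrink or don't shrink each redundant component) and compute its cost. To get the asymptotic \emph{equality} $\overline F_n = nS_n + \lambda_{vb}\log n + O_p(1)$ you must also show that no other mean-field posterior does strictly better at order $\log n$. Your remark about intermediate scalings $n^{-\alpha}$ gestures at this, but the harder part is ruling out configurations that mix the $H_0$ ``true'' directions with the $H-H_0$ ``redundant'' ones in a nontrivial way (the mean-field constraint does not a priori force the variational posterior to align with the true factorization $U_0V_0$). In the existing proofs this is handled by a careful lower bound on the data-fit term that penalizes any misalignment at order $n$, forcing the first $H_0$ variational components to track $U_0,V_0$ before the $\log n$ analysis can begin; you would need to supply that step.
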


In this paper, we mathematically show an upper bound $\overline{\lambda}$ for the RLCT $\lambda$ of the NMF in the case that is same as VBNMF; the model consists of Poisson distributions, and the prior consists of gamma distributions; the prior may have zero or divergence points in $K^{-1}(0)$.
By using the upper bound, we also derive a lower bound for the approximation error of VBNMF.

\section{Main Theorems and Proof}
In this section, we introduce the main results and prove them.
\subsection{Main Theorems}

In the following, $w=(U,V)$ is a parameter and $x=X$ is an observed random variable (matrix).

Let $\mathrm{M}(M,N,C)$ be a set of $M\times N$ matrices whose elements are in $C$, where $C$ is a subset of $\mathbb{R}$.
Let $K$ be a compact subset of $\mathbb{R}_{\geqq 0}=\{x \in \mathbb{R} | x \geqq 0\}$ and let 
$K_0$ be a compact subset $\mathbb{R}_{>0}=\{x \in \mathbb{R} | x > 0\}$.
Put $U \! \in \! \mathrm{M}(M,H,K)$, $V \! \in \! \mathrm{M}(H,N,K)$,
$U_0 \in \mathrm{M}(M,H_0,K_0)$ and $V_0 \in \mathrm{M}(H_0,N,K_0)$.
The non-negative rank \cite{Cohen} of $U_0V_0$, where $H \geqq H_0$ and $\{(x,y,a,b) \in K^2 \times K_0^2 |xy=ab \} \ne \emptyset$ are attained.

\begin{defi}[{\bf RLCT of NMF}]\label{def:NMFRLCT}
Assume that the largest pole of the function of one complex variable $z$, 
\[
\zeta(z)=\int_{\mathrm{M}(M,H,K) } dU \int_{\mathrm{M}(H,N,K) } dV
 \Bigl(\| UV-U_0V_0 \|^2\Bigr)^z \varphi(U,V)
\]
is equal to $(-\lambda)$. Then 
$\lambda$ is said to be the RLCT of NMF.
\end{defi}

We have already derived an upper bound of the RLCT of NMF
in the case that the prior $\varphi(U,V)$ is strictly positive and bounded \cite{nhayashi2,nhayashi5}.
However, to compare with VBNMF, we express the prior in terms of gamma distributions:
$$\varphi(U,V) = \mathrm{Gam}(U|\phi_U,\theta_U)\mathrm{Gam}(V|\phi_V,\theta_U).$$

In this paper, we prove the following theorems.

\begin{thm}\label{thm:main}
If the prior is the above gamma distributions, then the RLCT of NMF $\lambda$ satisfies the following inequality:
$$\lambda \leqq \frac{1}{2} \left[ (H-H_0)\min\{M\phi_U,N\phi_V \} +H_0 (M+N-1) \right].$$
The equality holds if $H=H_0=1$. 
\end{thm}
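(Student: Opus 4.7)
The plan is to decompose $UV-U_0V_0$ into rank-one components indexed by columns of $U$ and rows of $V$, and then reduce the RLCT of $(\|UV-U_0V_0\|^{2},\varphi)$ to a sum of single-component RLCTs that can be computed directly from local structure. Writing $u_k$ for the $k$th column of $U$ and $v_k^{T}$ for the $k$th row of $V$ (and $u_{0,k},v_{0,k}^{T}$ for the corresponding components of $U_0,V_0$), I set $\Delta_k=u_kv_k^{T}-u_{0,k}v_{0,k}^{T}$ for $k\leqq H_0$ and $\Delta_k=u_kv_k^{T}$ for $k>H_0$. By Cauchy--Schwarz, $\|UV-U_0V_0\|^{2}\leqq H\sum_k\|\Delta_k\|^{2}$; since the RLCT is monotone in its first argument pointwise and invariant under positive scalar multiplication, $\lambda$ is bounded above by the RLCT of $(\sum_k\|\Delta_k\|^{2},\varphi)$. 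The gamma prior factors as $\varphi(U,V)=\prod_{k=1}^{H}\varphi_k(u_k,v_k)$ over disjoint variable blocks, so the standard additivity rule for RLCTs of separable KL-type functions with independent variables gives $\lambda\leqq\sum_{k=1}^{H}\lambda_k$, where $\lambda_k$ denotes the RLCT of $(\|\Delta_k\|^{2},\varphi_k)$.

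For $k\leqq H_0$ I would expand $u_k=u_{0,k}+a$, $v_k=v_{0,k}+b$, giving $\Delta_k=L_k(a,b)+ab^{T}$ with $L_k(a,b)=u_{0,k}b^{T}+av_{0,k}^{T}$. Since $u_{0,k},v_{0,k}$ are strictly positive, $\ker L_k$ is the one-dimensional scaling direction spanned by $(-u_{0,k},v_{0,k})$, so $L_k$ has rank $M+N-1$ and the quadratic tail $ab^{T}$ is negligible near the origin. The gamma prior is smooth, bounded, and strictly positive at $(u_{0,k},v_{0,k})$, so diagonalizing the non-degenerate quadratic form $\|L_k(a,b)\|^{2}$ and integrating out the trivial scaling direction yields $\lambda_k=(M+N-1)/2$. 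For $k>H_0$, the identity $\|\Delta_k\|^{2}=\|u_k\|^{2}\|v_k\|^{2}$ over disjoint variables gives a factorization $\zeta_k(z)=\zeta_k^{(u)}(z)\,\zeta_k^{(v)}(z)$, whence $\lambda_k=\min(\lambda_k^{(u)},\lambda_k^{(v)})$. A polar blow-up $u_k=rs$ with $\|s\|=1$ converts $\int(\|u_k\|^{2})^{z}\prod_i u_{ik}^{\phi_U-1}e^{-\theta_U u_{ik}}\,du_k$ into an $r$-integral whose leading singular factor near $r=0$ is $r^{2z+M\phi_U-1}$, giving $\lambda_k^{(u)}=M\phi_U/2$; the symmetric calculation yields $\lambda_k^{(v)}=N\phi_V/2$, so $\lambda_k=\min(M\phi_U,N\phi_V)/2$. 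Summing over $k$ recovers the claimed upper bound.

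When $H=H_0=1$, only the regular component is present and the upper bound reduces to $(M+N-1)/2$; the matching lower bound follows from the same non-degenerate-quadratic analysis, since the rank-$(M+N-1)$ form $\|L_1(a,b)\|^{2}$ dominates the local behavior and the prior is strictly positive at the unique solution class. The main technical obstacle I anticipate is making the monotonicity and additivity manipulations rigorous in the presence of a gamma prior that may vanish or diverge on the boundary hyperplanes $u_{ik}=0$ and $v_{kj}=0$: one must localize carefully near the zero set of $K$ and separate the smooth-solution regime (relevant for $k\leqq H_0$) from the boundary-vanishing regime (relevant for $k>H_0$), and then verify that the column-wise additivity of RLCTs still applies when the per-column prior $\varphi_k$ is unbounded. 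A secondary subtlety is that the Cauchy--Schwarz constant $H$ is loose; this is harmless for the upper bound but indicates that closing the gap for general $H>H_0\geqq 1$ would require a finer analysis.
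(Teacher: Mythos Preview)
Your proposal is correct and follows essentially the same route as the paper: Cauchy--Schwarz to bound $\|UV-U_0V_0\|^{2}$ by $\sum_k\|\Delta_k\|^{2}$, column-wise additivity of RLCTs over the disjoint variable blocks $(u_k,v_k)$ with the factored gamma prior, and then separate computations for $k\leqq H_0$ (where the prior is strictly positive near the zero set, giving $(M+N-1)/2$) and $k>H_0$ (where a blow-up at the origin gives $\min(M\phi_U,N\phi_V)/2$). The only cosmetic differences are that the paper uses the affine coordinate chart $u_{ik}\mapsto u_{1k}u_{ik}$, $v_{kj}\mapsto v_{k1}v_{kj}$ in place of your polar blow-up, and cites earlier work (its Lemmas~\ref{lemH1}--\ref{lemH}) for the $k\leqq H_0$ case rather than giving your explicit rank-$(M{+}N{-}1)$ linearization argument.
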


We prove this theorem in the next section. As two applications of it, we obtain upper bounds for the free energy and Bayesian generalization error of NMF in this case.

\begin{thm}\label{thm:bayes}
Under the same assumption as Theorem~\ref{KohjimaVBNMF},
the free energy $F_n$ and the expected generalization error $\mathbb{E}[G_n]$ satisfy the following inequality:
\begin{gather*}
F_n \leqq  nS_n + \frac{1}{2} \left[ (H-H_0)\min\{M\phi_U,N\phi_V \} +H_0 (M+N-1) \right] \log n + O_p(1), \\
\mathbb{E}[G_n] \leqq  \frac{1}{2n} \left[ (H-H_0)\min\{M\phi_U,N\phi_V \} +H_0 (M+N-1) \right]
+o\left(\frac{1}{n}\right).
\end{gather*}
\end{thm}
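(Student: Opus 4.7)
The plan is to combine Theorem~\ref{thm:main} with the general singular learning theory formulas (\ref{WatanabeBayesF}) and (\ref{WatanabeBayesG}), after first reconciling the two notions of ``RLCT'' in play: the one built from the squared norm $\|UV-U_0V_0\|^2$ (Definition~\ref{def:NMFRLCT}) and the one built from the actual Poisson KL divergence that governs the free energy.

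First I would verify that these two RLCTs coincide under the hypotheses of Theorem~\ref{KohjimaVBNMF}. The KL divergence from the true distribution to the Poisson learning model is
\[
K(U,V)=\sum_{i,j}\!\left[(U_0V_0)_{ij}\log\frac{(U_0V_0)_{ij}}{(UV)_{ij}}-(U_0V_0)_{ij}+(UV)_{ij}\right].
\]
Since every entry of $U_0V_0$ is strictly positive and $(U,V)$ lies in a compact region of non-negative matrices, each summand is smooth and non-negative, vanishing precisely when $(UV)_{ij}=(U_0V_0)_{ij}$ with a non-degenerate quadratic minimum. A routine Taylor expansion around the zero set then produces constants $c_1,c_2>0$ with
\[
c_1\|UV-U_0V_0\|^2\leqq K(U,V)\leqq c_2\|UV-U_0V_0\|^2
\]
throughout the parameter region. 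Because the zeta function that defines the RLCT depends only on $K$ up to such two-sided comparability, the nearest pole to the origin is the same for both zeta functions, and the Poisson RLCT equals the RLCT of Definition~\ref{def:NMFRLCT}; call this common value $\lambda$.

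Next I would apply Theorem~\ref{thm:main} directly to conclude that $\lambda\leqq\overline{\lambda}$, where
\[
\overline{\lambda}=\tfrac{1}{2}\!\left[(H-H_0)\min\{M\phi_U,N\phi_V\}+H_0(M+N-1)\right],
\]
and substitute into (\ref{WatanabeBayesF}) and (\ref{WatanabeBayesG}). For $F_n$, the multiplicity $m$ is a positive integer, so $-(m-1)\log\log n\leqq 0$ and this term can be absorbed harmlessly into the right-hand side of an upper bound, giving
\[
F_n\leqq nS_n+\lambda\log n+O_p(1)\leqq nS_n+\overline{\lambda}\log n+O_p(1).
\]
For $\mathbb{E}[G_n]$, the substitution is immediate and yields $\mathbb{E}[G_n]\leqq \overline{\lambda}/n+o(1/n)$, which is the second claim.

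The main obstacle will be handling the first step cleanly, because the gamma prior can vanish at the boundary (when $\phi_U<1$ or $\phi_V<1$) or diverge (when $\phi_U,\phi_V$ are large and the entries are small) within the parameter region; this interacts nontrivially with the zeta integral and would, in general, make a naive ``comparability of $K$'' argument fragile. The strict positivity of the entries of $U_0V_0$ is essential here, since it places the zero set of $K$ in the interior of the domain, away from the zeros and singularities of the gamma prior, so that the two-sided comparison of $K$ and $\|UV-U_0V_0\|^2$ is valid on a full neighborhood of $K^{-1}(0)$ and the RLCTs genuinely agree. Once this compatibility is secured, the remainder of the proof is a direct transcription of Theorem~\ref{thm:main} into the $F_n$ and $G_n$ asymptotics via Watanabe's singular learning theory.
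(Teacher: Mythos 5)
Your overall route is the same as the paper's: Theorem~\ref{thm:bayes} is proved there exactly by combining the asymptotic formulas (\ref{WatanabeBayesF}) and (\ref{WatanabeBayesG}) with the RLCT bound of Theorem~\ref{thm:main}, using $m\geqq 1$ to discard the $-(m-1)\log\log n$ term; the identification of the statistical RLCT with that of Definition~\ref{def:NMFRLCT}, i.e.\ $K(U,V)\simeq\|UV-U_0V_0\|^2$, is the step the paper disposes of by citing \cite{nhayashi2} at the start of \ref{pf-lem}, so your decision to argue it explicitly is consistent with the paper's logic.

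Two claims in your justification of that comparison step are, however, wrong as stated, though both are repairable. First, the two-sided bound $c_1\|UV-U_0V_0\|^2\leqq K(U,V)\leqq c_2\|UV-U_0V_0\|^2$ does not hold ``throughout the parameter region'': at points where some $(UV)_{ij}=0$ (which occur on the boundary of the non-negative domain) the Poisson KL divergence diverges while the squared norm stays bounded, so the upper comparison fails there. What is true, and what suffices, is the comparison on a neighborhood of $K^{-1}(0)$, since away from the zero set $K$ is bounded below by a positive constant and that region contributes no poles to the zeta function. Second, and more substantively, strict positivity of the entries of $U_0V_0$ does \emph{not} place $K^{-1}(0)$ away from the zeros and divergence points of the gamma prior when $H>H_0$: the zero set contains points where the redundant columns of $U$ (or rows of $V$) vanish identically, and there the prior is $0$ (if $\phi_U>1$) or divergent (if $\phi_U<1$). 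This intersection is precisely why Lemma~\ref{lemH0} performs a simultaneous resolution of $\Phi=0$ and $\varphi=0$, and why the bound in Theorem~\ref{thm:main} depends on $\phi_U,\phi_V$ at all. Positivity of $U_0V_0$ only guarantees that the \emph{product} entries $(UV)_{ij}$ stay bounded away from zero near $K^{-1}(0)$, which is all the local Taylor comparison of $K$ with the squared norm actually needs. Since you subsequently invoke Theorem~\ref{thm:main} (which already accounts for the prior's zeros and singularities) rather than using the false claim to treat the prior as positive and bounded, your conclusion stands once these justifications are corrected.
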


In Theorem~\ref{thm:bayes}, we study a case when a set of random matrices $X^n = (X_1,X_2,\ldots,X_n)$ 
are observed and the true decomposition $U_0$ and $V_0$ are statistically estimated. 
NMF has studied in the case when only one target matrix is decomposed;
however, in general, decomposition of a set of independent matrices should be studied
because the target matrices are often obtained daily, monthly, or in different places \cite{Kohjima}. In such cases, 
decomposition of a set of matrices amounts to making a statistical inference.
We will consider this situation, which is common to \cite{Kohjima2017phase} and Theorem~\ref{KohjimaVBNMF}.
A statistical model $p(X|U,V)$ which has parameters $(U,V)$ is used for the estimation.
The upper bounds of the free energy and generalization error in Bayesian inference are given by Theorem~\ref{thm:bayes}.

\begin{thm}\label{thm:BvsVB}
Let the variational free energy of VBNMF be $\overline{F}_n$. From the assumption of Theorem~\ref{KohjimaVBNMF}, the entries of $U_0V_0$ are not zero; thus $H_0>0$. Then, the following inequality holds:
$$\overline{F}_n -F_n \geqq \underline{\lambda} \log n + O_p(1),$$
where
$$\underline{\lambda} = \begin{cases}
\frac{1}{2}[(H-H_0)(M\phi_U+N\phi_V+\max\{M\phi_U,N\phi_V\})+H_0], & \mbox{if } M\phi_U + N\phi_V<\frac{M+N}{2} \\
\frac{1}{2}[(H-H_0)(M+N-\min\{M\phi_U,N\phi_V\})+H_0], & \mbox{otherwise}.
\end{cases}$$ 
\end{thm}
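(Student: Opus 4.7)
The plan is to combine the two asymptotic expansions of $\overline{F}_n$ and $F_n$ that are already in hand, and then to invoke the upper bound on the RLCT supplied by Theorem~\ref{thm:main}. The case split in the statement of Theorem~\ref{thm:BvsVB} will then emerge purely from the case split in Kohjima's formula for $\lambda_{vb}$ (Theorem~\ref{KohjimaVBNMF}).

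Concretely, under the stated hypotheses Theorem~\ref{KohjimaVBNMF} yields
\[
\overline{F}_n = nS_n + \lambda_{vb}\log n + O_p(1),
\]
while the singular learning theory formula~(\ref{WatanabeBayesF}) gives
\[
F_n = nS_n + \lambda\log n - (m-1)\log\log n + O_p(1),
\]
where $\lambda$ and $m$ are the RLCT and multiplicity of NMF in this Poisson/gamma setting. Subtracting and using $(m-1)\log\log n \geq 0$ (which may be discarded for a lower bound since its sign is favorable), I obtain
\[
\overline{F}_n - F_n \geq (\lambda_{vb} - \lambda)\log n + O_p(1).
\]
Setting $\overline{\lambda} := \tfrac{1}{2}\bigl[(H-H_0)\min\{M\phi_U,N\phi_V\} + H_0(M+N-1)\bigr]$, Theorem~\ref{thm:main} gives $\lambda \leq \overline{\lambda}$, hence $\lambda_{vb}-\lambda \geq \lambda_{vb}-\overline{\lambda}$. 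It then suffices to check that $\lambda_{vb}-\overline{\lambda}$ equals the claimed $\underline{\lambda}$ in each of the two regimes.

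In the low-hyperparameter regime $M\phi_U + N\phi_V < (M+N)/2$, I substitute $\lambda_{vb} = (H-H_0)(M\phi_U + N\phi_V) + \tfrac{1}{2}H_0(M+N)$ and apply the elementary identity $a + b - \tfrac{1}{2}\min\{a,b\} = \tfrac{1}{2}(a+b+\max\{a,b\})$ with $a = M\phi_U$, $b = N\phi_V$, while noting that the $H_0$ contributions combine as $\tfrac{1}{2}H_0(M+N) - \tfrac{1}{2}H_0(M+N-1) = \tfrac{1}{2}H_0$; this reproduces the first case. In the complementary regime, $\lambda_{vb} = \tfrac{1}{2}H(M+N)$, and splitting $H(M+N) = (H-H_0)(M+N) + H_0(M+N)$ gives
\[
\lambda_{vb} - \overline{\lambda} = \tfrac{1}{2}\bigl[(H-H_0)(M+N - \min\{M\phi_U,N\phi_V\}) + H_0\bigr],
\]
matching the second case.

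The real content of this theorem is therefore Theorem~\ref{thm:main} itself; once that upper bound on $\lambda$ is available, the present argument is essentially a subtraction of two asymptotic formulas followed by a short piecewise calculation. The only mild care needed is the treatment of the $(m-1)\log\log n$ correction in the Bayesian expansion, handled above by noting that its sign aligns with the direction of the inequality we want to establish.
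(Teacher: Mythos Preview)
Your proposal is correct and follows essentially the same approach as the paper: subtract the two asymptotic expansions, invoke the bound $\lambda\leq\overline{\lambda}$ from Theorem~\ref{thm:main}, and then compute $\lambda_{vb}-\overline{\lambda}$ case by case using the identity $a+b-\min\{a,b\}=\max\{a,b\}$. The only cosmetic difference is that the paper packages the $F_n$ bound via Theorem~\ref{thm:bayes} rather than citing~(\ref{WatanabeBayesF}) and Theorem~\ref{thm:main} directly, and your explicit handling of the $(m-1)\log\log n$ term is exactly the ``$m\geq 1$'' step used there.
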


Theorem~\ref{thm:BvsVB} gives a lower bound for the KL divergence of the variational posterior and the true one (the difference between the variational free energy and the free energy).

\subsection{Lemmas and Sketches of Proofs for Main Theorems}
In order to prove Theorem~\ref{thm:main}, we use the following three lemmas proved in \ref{pf-lem}.

\begin{lem}
\label{lemH0}
Let $\lambda$ be the absolute of the maximum pole of
$$\zeta(z)=\iint dUdV\left(\| UV \|^2\right)^z \mathrm{Gam}(U|\phi_U,\theta_U)\mathrm{Gam}(V|\phi_V,\theta_V).$$
Then, $$\lambda = \frac{H\min\{ M\phi_U,N\phi_V \}}{2}$$
holds; this corresponds to equality in Theorem~\ref{thm:main} in the case $H_0=0$.
\end{lem}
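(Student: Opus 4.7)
The plan is to sandwich $\|UV\|^2$ between constant multiples of a function whose variables decouple block by block, and then exploit the behavior of the RLCT under sums and products in disjoint variables. Writing $(UV)_{ij}=\sum_{k=1}^H u_{ik}v_{kj}$ and using the elementary inequality $\sum_{k} a_k^2 \leqq (\sum_{k} a_k)^2 \leqq H\sum_{k} a_k^2$, valid for $a_k \geqq 0$, I obtain
\[
\sum_{k=1}^H \|U_k\|^2\,\|V_k\|^2 \;\leqq\; \|UV\|^2 \;\leqq\; H\sum_{k=1}^H \|U_k\|^2\,\|V_k\|^2,
\]
where $U_k$ is the $k$-th column of $U$ and $V_k$ the $k$-th row of $V$. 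Two non-negative analytic functions bounded by constant multiples of each other have the same abscissa of convergence of their zeta integrals against any fixed prior, since $c^z \zeta_g(z) \leqq \zeta_f(z) \leqq C^z \zeta_g(z)$ for real negative $z$; so it suffices to compute the RLCT of $K(U,V)=\sum_{k=1}^H \|U_k\|^2\,\|V_k\|^2$.

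The variables $(U_k,V_k)$ are pairwise disjoint across $k$, and the gamma prior factors as $\varphi = \prod_{k=1}^H \varphi_k$. By additivity of the RLCT over disjoint-variable sums one obtains $\lambda = \sum_{k=1}^H \lambda_k$, where $\lambda_k$ is the RLCT of $\|U_k\|^2\|V_k\|^2$ against $\varphi_k$. Each summand is a product in the disjoint blocks $U_k$ and $V_k$, so its zeta integral factors as $\zeta_k^U(z)\,\zeta_k^V(z)$ and its rightmost pole is at $-\min(\lambda_k^U,\lambda_k^V)$. Repeating the additivity argument for $\|U_k\|^2 = \sum_i u_{ik}^2$, and noting that the one-dimensional integral $\int u^{2z}\,u^{\phi_U-1}e^{-\theta_U u}\,du$ has a simple pole at $z=-\phi_U/2$, gives $\lambda_k^U = M\phi_U/2$ and similarly $\lambda_k^V = N\phi_V/2$. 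Hence $\lambda_k = \min\{M\phi_U, N\phi_V\}/2$ and $\lambda = H\min\{M\phi_U, N\phi_V\}/2$, as claimed.

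The main obstacle I anticipate is a rigorous justification of disjoint-variable additivity when the gamma prior is singular at the boundary (for $\phi_U$ or $\phi_V < 1$). The cleanest route is to realize $\zeta(z)$ as the Mellin transform of the push-forward distribution $P(t)=\int \delta(K-t)\varphi\,dUdV$, identify $P$ as the convolution of the push-forwards of the individual summands, and read off additivity of the leading small-$t$ exponent from the Beta integral $\int_0^t s^{\lambda_1-1}(t-s)^{\lambda_2-1}ds = t^{\lambda_1+\lambda_2-1}B(\lambda_1,\lambda_2)$. Alternatively, one can resolve each rank-one piece $\|U_k\|^2\|V_k\|^2$ independently into monomial form on separate coordinate charts and then combine the resolutions into a joint resolution of $K$; the bounded analytic factors $e^{-\theta_U u}$ and $e^{-\theta_V v}$ contribute only a non-vanishing analytic correction on the compact domain, so the RLCT is determined entirely by the monomial exponents of $K$ together with the $u^{\phi_U - 1}$ and $v^{\phi_V - 1}$ weights.
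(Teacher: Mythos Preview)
Your proposal is correct and follows the same structural arc as the paper: the identical sandwich inequality (using non-negativity of the entries) reduces $\|UV\|^2$ to $\sum_k \|U_k\|^2\|V_k\|^2$, and both arguments then decouple over $k$ via disjoint-variable additivity. The only substantive difference is in how the RLCT of each rank-one piece $\|U_k\|^2\|V_k\|^2$ is extracted. The paper performs explicit blow-ups $u_{ik}\mapsto u_{1k}u_{ik}$ ($i\neq 1$) and $v_{kj}\mapsto v_{k1}v_{kj}$ ($j\neq 1$) to reach normal-crossing form $u_{1k}^2 v_{k1}^2\times(\text{unit})$, tracks the prior and Jacobian through the change of variables, and reads off the poles $-M\phi_U/2$ and $-N\phi_V/2$ from the resulting one-variable integral. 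You instead factor the zeta integral as $\zeta_k^U(z)\zeta_k^V(z)$ (product in disjoint blocks) and then apply additivity once more to $\|U_k\|^2=\sum_i u_{ik}^2$, landing on one-dimensional gamma integrals with pole $-\phi_U/2$. Your route is cleaner if the disjoint-variable additivity lemma is taken as known---and your Mellin/convolution sketch is the right way to justify it even for singular gamma weights, since $t^{\phi-1}$ is locally integrable---whereas the paper's blow-up is self-contained and, as a bonus, makes the multiplicity of the pole visible. One minor slip: for real $z<0$ your sandwich on the zeta functions should read $C^z\zeta_g\leqq\zeta_f\leqq c^z\zeta_g$ rather than the other way round, but the conclusion (equal abscissae of convergence) is unaffected.
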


\begin{lem}
\label{lemH1}
If $H_0=H=1$, equality in Theorem~\ref{thm:main} holds:
$$\lambda = \frac{M+N-1}{2}.$$
\end{lem}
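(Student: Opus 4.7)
The plan is to analyze $\zeta(z)$ by localizing it around the zero set of $\|UV - U_0V_0\|^2$ and reducing to a standard Morse--Bott form. With $H = H_0 = 1$, I will write $U = u$ as a column vector and $V = v$ as a row vector, and likewise $U_0 = u_0$, $V_0 = v_0$. Since every entry of $u_0$ and $v_0$ is strictly positive, the rank-one factorization $uv = u_0 v_0$ is unique up to scaling; the zero set is therefore the smooth one-dimensional curve
\[
Z = \{(cu_0,\, v_0/c) : c \in I\}
\]
for some compact interval $I \subset (0,\infty)$, lying strictly inside the positive orthant, where the gamma prior $\varphi$ is real-analytic, strictly positive, and bounded.

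Fix a point $w^* = (u^*, v^*) = (cu_0, v_0/c)$ on $Z$. Writing $u = u^* + \xi$, $v = v^* + \eta$,
\[
\|uv - u_0 v_0\|^2 = \sum_{i,j}\bigl(v^*_j \xi_i + u^*_i \eta_j\bigr)^2 + O\bigl(\|(\xi,\eta)\|^3\bigr).
\]
The quadratic form $Q(\xi,\eta)=\sum_{i,j}(v^*_j\xi_i+u^*_i\eta_j)^2$ vanishes iff $\xi_i/u^*_i = -\eta_j/v^*_j$ is constant in $(i,j)$, so $\ker Q$ is the one-dimensional line spanned by $(u^*,-v^*)$, precisely the tangent to $Z$ at $w^*$. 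Hence $Q$ has rank $M + N - 1$ transverse to $Z$, i.e.\ the integrand is a Morse--Bott function near $w^*$. By the Morse--Bott lemma there exist smooth local coordinates $(t_1,\dots,t_{M+N-1},s)$ on a neighborhood of $w^*$ such that $\|uv - u_0v_0\|^2 = t_1^2 + \cdots + t_{M+N-1}^2$ and $s$ parametrizes $Z$.

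In these coordinates the local contribution to $\zeta$ reads
\[
\zeta_{w^*}(z) = \int \bigl(t_1^2 + \cdots + t_{M+N-1}^2\bigr)^z \Phi(t,s)\, dt\, ds,
\]
with $\Phi$ a smooth, strictly positive density absorbing the Jacobian and $\varphi$. Polar coordinates $(t_1,\dots,t_{M+N-1}) = r(\omega_1,\dots,\omega_{M+N-1})$ expose the factor $r^{2z + M + N - 2}\, dr$, whose largest pole is $z = -(M+N-1)/2$; the $s$-integration runs over a bounded interval and contributes only a constant. Covering $Z$ by finitely many such charts, and supplementing with charts where the integrand is bounded away from zero (which contribute only entire pieces to $\zeta(z)$), one concludes that the largest pole of $\zeta(z)$ is at $z = -(M+N-1)/2$, i.e.\ $\lambda = (M+N-1)/2$.

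The main obstacle is ensuring that the Morse--Bott reduction is unaffected by the possibly singular gamma prior. This is handled by noting that $Z$ lies in the strict interior of the positive orthant: the hyperplane singularities of $\varphi$ (arising when $\phi_U < 1$ or $\phi_V < 1$) sit in the region $\|uv - u_0 v_0\|^2 > 0$, where the integrand is holomorphic in $z$, and thus they contribute no pole to $\zeta(z)$.
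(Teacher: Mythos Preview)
Your argument is correct. Both your proof and the paper's rest on the same preliminary observation: since every entry of $u_0,v_0$ is strictly positive, the zero set $\Phi^{-1}(0)$ lies entirely in the open positive orthant, so the gamma prior is smooth and strictly positive there and its coordinate-hyperplane singularities contribute nothing to the largest pole. From that point on the two diverge. The paper simply invokes its earlier work \cite{nhayashi2}, where the RLCT $\tfrac{M+N-1}{2}$ is obtained by explicit blow-ups in the style of the proof of Lemma~\ref{lemH0}. You instead compute the transverse Hessian of $\|uv-u_0v_0\|^2$ along the one-dimensional critical manifold $Z$, verify it has full rank $M+N-1$, and apply the Morse--Bott normal form together with polar coordinates. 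Your route is more self-contained and conceptually transparent for this particular lemma---whenever the zero set is a smooth submanifold with nondegenerate normal Hessian, the RLCT is just half the codimension, and your argument makes this explicit. The paper's blow-up approach, by contrast, is the one that generalizes to the genuinely singular situations of Lemmas~\ref{lemH0} and~\ref{lemH} and to the full Theorem~\ref{thm:main}, where no Morse--Bott structure is available. One small point you might tighten: the Morse--Bott lemma yields only $C^\infty$ coordinates, not analytic ones, but this is harmless here since you only need the \emph{largest} pole, and the two-sided bound $c_1\,d(w,Z)^2\le \|uv-u_0v_0\|^2\le c_2\,d(w,Z)^2$ already pins that down; the boundary of the compact parameter domain likewise does not shift the leading pole.
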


\begin{lem}
\label{lemH}
If $H=H_0$, Theorem\ref{thm:main} holds:
$$\lambda \leqq \frac{H_0(M+N-1)}{2}.$$
\end{lem}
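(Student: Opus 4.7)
The RLCT in Definition~\ref{def:NMFRLCT} is a global invariant, but it equals the minimum of local log canonical thresholds over the analytic zero set $\{(U,V):UV=U_0V_0\}$ of the loss. It therefore suffices to exhibit one point of this set at which the local RLCT is at most $H_0(M+N-1)/2$. The plan is to use the distinguished point $(U,V)=(U_0,V_0)$ itself. Because every entry of $U_0$ and $V_0$ lies in the open positive set $K_0\subset\mathbb{R}_{>0}$, the gamma prior $\varphi(U,V)$ is smooth and bounded between two strictly positive constants on a neighborhood of $(U_0,V_0)$. Such a factor does not shift any pole of $\zeta(z)$, so the problem reduces to estimating the local log canonical threshold of $\|UV-U_0V_0\|^2$ alone on that neighborhood, with the prior replaced by Lebesgue measure.

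\textbf{Rank-one splitting.} Writing $u_k$ and $v_k^T$ for the $k$-th column of $U$ and the $k$-th row of $V$ (and similarly $u_{0,k},v_{0,k}$ for $U_0,V_0$), the identity $UV-U_0V_0=\sum_{k=1}^{H_0}(u_kv_k^T-u_{0,k}v_{0,k}^T)$ together with the elementary inequality $\|\sum_k A_k\|^2\leq H_0\sum_k\|A_k\|^2$ gives
\begin{equation*}
\|UV-U_0V_0\|^2 \;\leq\; H_0\sum_{k=1}^{H_0}\|u_kv_k^T-u_{0,k}v_{0,k}^T\|^2,
\end{equation*}
and each summand on the right depends only on the disjoint block of variables $(u_k,v_k)$. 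Each such summand is precisely the $H=H_0=1$ loss already analyzed in Lemma~\ref{lemH1}, whose RLCT equals $(M+N-1)/2$. The standard comparison principle---a pointwise domination $K\leq K'$ on a neighborhood of a common zero forces $\lambda(K)\leq\lambda(K')$---therefore reduces the task to computing the RLCT of the right-hand side, the harmless scalar $H_0$ being unable to shift any pole of the associated $\zeta$.

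\textbf{Additivity and the main obstacle.} The closing step relies on additivity of the RLCT for sums of non-negative analytic functions in disjoint variable groups: if $K'(w)=\sum_k K_k(w_k)$ with the $w_k$ pairwise disjoint, then $\lambda(K')=\sum_k\lambda(K_k)$. Granting this, one immediately obtains $\lambda\leq H_0\cdot(M+N-1)/2$, which finishes the proof. The main technical effort lies in establishing this additivity for the multivariate rank-one NMF pieces rather than for the one-variable monomial toy case. The plan is to reuse the explicit resolution of singularities constructed inside the proof of Lemma~\ref{lemH1} for each index $k$, form the Cartesian product of these $H_0$ resolutions to obtain a simultaneous resolution of the disjoint sum, and check that the normal-crossing exponents and Jacobian orders read off from each exceptional divisor simply add. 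Because the blocks $(u_k,v_k)$ are coordinate-disjoint, no cross-terms between different $k$ arise, and the local RLCT at $(U_0,V_0)$ is bounded above by $\sum_{k=1}^{H_0}(M+N-1)/2 = H_0(M+N-1)/2$, yielding the claimed inequality.
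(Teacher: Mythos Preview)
Your proposal is correct and follows essentially the same route the paper takes. The paper's explicit treatment of Lemma~\ref{lemH} in Appendix~\ref{pf-lem} is brief---it simply notes that on $\Phi^{-1}(0)$ the matrices $(U,V)$ have no zero entries, so the gamma prior is bounded away from $0$ and $\infty$ there, and then defers to the prior work \cite{nhayashi2}. However, the argument that underlies both that reference and the paper's own proof of Theorem~\ref{thm:main} in Appendix~\ref{pf-main} is exactly what you wrote: bound $\|UV-U_0V_0\|^2\leq C\sum_{k=1}^{H_0}\|u_kv_k^T-u_k^0(v_k^0)^T\|^2$ via Cauchy--Schwarz, observe that the summands live in disjoint blocks of variables, invoke Lemma~\ref{lemH1} on each block to get local RLCT $(M+N-1)/2$, and sum. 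The paper phrases the monotonicity and additivity steps as ``RLCTs are order isomorphic'' and ``the variables are independent,'' whereas you spell out the localization at $(U_0,V_0)$ and flag additivity as the point needing care; your plan to verify it via the product of the rank-one resolutions is the standard argument and is exactly how one would fill in what the paper leaves implicit.
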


Let the entries of the matrices $(U,V)$ be
$$U\! = \!(u_1,\ldots,u_H) , u_{k}\!=\!(u_{ik})_{i=1}^{M},$$ 
$$V\! = \!(v_1,\ldots,v_H)^T , v_{k}\!=\!(v_{kj})_{j=1}^{N},$$
and
the ones of $(U_0,V_0)$ be
$$U_0\! =\! (u^0_1,\ldots,u^0_{H_0}) , u^0_{k}\!=\!(u^0_{ik})_{i=1}^{M},$$
$$V_0\! =\! (v^0_1,\ldots,v^0_{H_0})^T , v^0_{k}\!=\!(v^0_{kj})_{j=1}^{N}.$$

Now, let us sketch the proof of Theorem~\ref{thm:main} using the above lemmas.
The rigorous proof is given in \ref{pf-main}.

\begin{proof}[Sketch of Proof of Theorem~\ref{thm:main}]
Let $K$ and $L$ be non-negative analytic functions on a finite-dimensional Euclidean space.
A binary relation $\simeq$ is defined by
$$K(w) \simeq L(w) \Leftrightarrow_{def} \mbox{the RLCT of $K(w)$ is equal to the one of $L(w)$}.$$
Put
$$K_1(U,V)=\sum_{k=1}^{H_0}\left\| u_k (v_k)^T - u^0_k (v^0_k)^T\right\|^2$$
and
$$K_2(U,V)= 
\left\|
  \left(
    \begin{array}{ccc}
  u_{1(H_0 +1)} & \ldots & u_{1H} \\
  \vdots & \ddots & \vdots \\
  u_{M(H_0 +1)} & \ldots & u_{MH} \\
    \end{array}
  \right) 
  \left(
   \begin{array}{ccc}
  v_{(H_0 +1)1} & \ldots & v_{(H_0 +1)N} \\
  \vdots & \ddots & \vdots \\
  v_{H1} & \ldots & v_{HN} \\
    \end{array}
  \right)
\right\|^2.
$$

By developing $\| UV-U_0V_0 \|^2$, we have
\begin{eqnarray*}
  && \| UV-U_0V_0 \|^2  \\
  & \leqq & C \sum_{i=1}^M \sum_{j=1}^N 
\left( \sum_{k=1}^{H_0}  (u_{ik}v_{kj} - u^0_{ik}v^0_{kj})^2 + \sum_{k=H_0 +1}^H u_{ik}^2 v_{kj}^2 \right)  \ \rm{for} \ \exists C>0(\rm{const.}) \\
  &\simeq& K_1(U,V) + K_2(U,V).
\end{eqnarray*}

Because the prior $\varphi(U,V)\geqq 0$ and $K_1(U,V) >0$ in $\{(U,V) \mid \varphi(U,V)=0\}$, all we have to do is find the RLCT of
$$K_1(U,V) + K_2(U,V) \varphi(U,V)$$
to derive an upper bound.

Since $K_1(U,V)$ corresponds to the proof of Lemma \rm{\ref{lemH}} in the case that $H$ is replaced by $H_0$, the RLCT of $K_1(U,V)$ equals
$$\lambda_1 = H_0 \frac{M+N-1}{2}.$$
In contrast, $K_2(U,V) \varphi(U,V)$ corresponds to Lemma \rm{\ref{lemH0}} in the case that $H$ is replaced by $H-H_0$. That causes the RLCT of $K_2(U,V) \varphi(U,V)$ to be equal to
$$\lambda_2 = \frac{(H-H_0)\min\{M\phi_U,N\phi_V \}}{2}.$$
Considering the relation of the variables, the RLCT $\lambda$ satisfies the following inequality:
$$\lambda \leqq \frac{1}{2} \left[ (H-H_0)\min\{M\phi_U,N\phi_V\} +H_0 (M+N-1) \right].$$
\end{proof}

There is a proposition giving a sufficient condition for equality in Theorem~\ref{thm:main}.

\begin{prop}
Under the same assumption of Theorem~\ref{thm:main}, suppose 
\[
  f_{ij}^k = \begin{cases}
    u_{ik}v_{kj} - u^0_{ik}v^0_{kj} & (k \in \{1,\ldots,H_0 \}) \\
    u_{ik}v_{kj} & (k \in \{H_0+1,\ldots,H \})
  \end{cases}.
\]
If $f_{ij}^k \geqq 0$, i.e. , $ u_{ik}v_{kj} - u^0_{ik}v^0_{kj} \geqq 0 \quad (k \in \{1,\ldots,H_0 \})$, 
$$\lambda = \frac{1}{2} \left[ (H-H_0)\min\{M\phi_U,N\phi_V\} +H_0 (M+N-1) \right].$$
\end{prop}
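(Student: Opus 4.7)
The plan is to promote the one-sided comparison used in the sketch of Theorem~\ref{thm:main} into a genuine two-sided equivalence, and then to finish by a Fubini-style additivity argument for the RLCT. Writing $(UV-U_0V_0)_{ij} = \sum_{k=1}^H f_{ij}^k$, we have $\|UV-U_0V_0\|^2 = \sum_{i,j}\bigl(\sum_k f_{ij}^k\bigr)^2$. The sketch uses the elementary bound $(\sum_k a_k)^2 \leq H\sum_k a_k^2$ to conclude $\|UV-U_0V_0\|^2 \leq H(K_1+K_2)$, which delivers only the upper bound on $\lambda$. Under the hypothesis $f_{ij}^k \geq 0$ the converse inequality $(\sum_k a_k)^2 \geq \sum_k a_k^2$ for non-negative $a_k$ (obtained by simply dropping the non-negative cross terms) yields
\[
K_1(U,V) + K_2(U,V) \leq \|UV - U_0V_0\|^2 \leq H\bigl(K_1(U,V) + K_2(U,V)\bigr).
\]
Since the zeta function of $\|UV-U_0V_0\|^2$ and that of $K_1+K_2$ differ only by a bounded positive multiplicative factor on the (compact) parameter domain, their poles coincide, so $\|UV-U_0V_0\|^2 \simeq K_1+K_2$ in the sense of the relation defined in the paper.

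The computation thus reduces to the RLCT of $K_1+K_2$ against the prior $\varphi$. Since $K_1$ depends only on the blocks $(u_{ik},v_{kj})_{k \leq H_0}$, $K_2$ only on $(u_{ik},v_{kj})_{k > H_0}$, and the gamma prior factorizes correspondingly as $\varphi = \varphi_1\varphi_2$, I would invoke the additivity formula
\[
\mathrm{RLCT}(K_1+K_2,\varphi_1\varphi_2) = \mathrm{RLCT}(K_1,\varphi_1) + \mathrm{RLCT}(K_2,\varphi_2),
\]
valid for non-negative analytic functions on disjoint variable blocks with a factored prior. This follows from the polar-type substitution $(K_1,K_2) = (r\sigma,\,r(1-\sigma))$ in the joint Mellin integral, which isolates a radial factor whose exponent accumulates both contributions. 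Lemma~\ref{lemH0} applied with $H$ replaced by $H-H_0$ supplies the exact RLCT of $K_2$ against $\varphi_2$ as $(H-H_0)\min\{M\phi_U,N\phi_V\}/2$. For $K_1$ against $\varphi_1$ --- the $H = H_0$ case --- the inequality of Lemma~\ref{lemH} must be matched by a lower bound; since by hypothesis $(U_0,V_0) \in \mathrm{M}(M,H_0,K_0) \times \mathrm{M}(H_0,N,K_0)$ has all entries strictly positive, $\varphi_1$ is bounded above and below on a compact neighbourhood of $(U_0,V_0)$ in the interior of the domain, and so contributes no singular factor. The RLCT there reduces to that of $K_1$ with a strictly positive bounded analytic prior, which is exactly $H_0(M+N-1)/2$ by the blow-up analysis in the proof of Lemma~\ref{lemH}.

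Summing the two contributions yields the announced equality. The main obstacle is the additivity step: while the polar-decomposition argument is standard when both summands vanish only along a common smooth subspace, here the zero sets of $K_1$ and the gamma prior $\varphi_2$ interact on the boundary of the parameter region (where some $u_{ik},v_{kj}$ with $k > H_0$ approach zero), and one must verify that the dominant pole of the joint zeta integral still originates from $K_1+K_2$ rather than from the singularities of the prior. A secondary subtlety is extracting from the proof of Lemma~\ref{lemH} an exact value rather than merely an upper bound; the boundedness of $\varphi_1$ near the strictly positive $(U_0,V_0)$ localizes the problem to the setting of a strictly positive bounded prior, which is the case already handled in the earlier works cited in the paper, thereby closing the gap between the upper and lower bounds.
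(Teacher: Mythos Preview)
Your proposal is correct and follows essentially the same route as the paper: the key step in both is the two-sided inequality $\sum_k (f_{ij}^k)^2 \le \bigl(\sum_k f_{ij}^k\bigr)^2 \le H\sum_k (f_{ij}^k)^2$ from non-negativity, yielding $\|UV-U_0V_0\|^2 \simeq K_1+K_2$, after which the exact RLCT is read off from Lemmas~\ref{lemH0} and~\ref{lemH1} via additivity over the disjoint $k$-blocks. The paper simply asserts $\overline{\lambda}=\lambda_1+\lambda_2$ from variable independence and takes $\lambda_1=H_0(M+N-1)/2$ as exact (via Lemma~\ref{lemH1} applied to each rank-one summand of $K_1$), whereas you spell out the additivity via a polar substitution and justify the exactness of $\lambda_1$ through local positivity of $\varphi_1$ near $(U_0,V_0)$; these are careful elaborations of steps the paper treats tersely, not a different argument.
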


\begin{proof}
Owing to $f_{ij}^k \geqq 0$, we have
$$\sum_{k=1}^H (f_{ij}^k)^2 \leqq \left( \sum_{k=1}^H f_{ij}^k \right)^2 \leqq H \sum_{k=1}^H (f_{ij}^k)^2.$$
Thus, 
$$\sum_{k=1}^H (f_{ij}^k)^2 \sim \left( \sum_{k=1}^H f_{ij}^k \right)^2.$$
Using the above relation, we get
\begin{eqnarray*}
  && \| UV-U_0V_0 \|^2  \\
  & \simeq & \sum_{i=1}^M \sum_{j=1}^N 
\left( \sum_{k=1}^{H}  (f_{ij}^k)^2 \right) \\
  &=& \sum_{i=1}^M \sum_{j=1}^N 
\left( \sum_{k=1}^{H_0}  (u_{ik}v_{kj} - u^0_{ik}v^0_{kj})^2 + \sum_{k=H_0 +1}^H u_{ik}^2 v_{kj}^2 \right) \\
  &\simeq& K_1(U,V) + K_2(U,V).
\end{eqnarray*}
$$\therefore \quad \lambda =  \frac{1}{2} \left[ (H-H_0)\min\{M\phi_U,N\phi_V\} +H_0 (M+N-1) \right].$$
\end{proof}

Next, Theorem~\ref{thm:bayes} and \ref{thm:BvsVB} are derived using Theorem~\ref{thm:main}.
Rigorous proofs are given in \ref{pf-main2}.

\begin{proof}[Sketch of Proof for Theorem~\ref{thm:bayes} and \ref{thm:BvsVB}]

Theorem~\ref{thm:bayes} is proved using the equalities (\ref{WatanabeBayesF}) and  (\ref{WatanabeBayesG}), and Theorem~\ref{thm:main}.
Theorem~\ref{thm:BvsVB} is derived as follows.
From Theorem~\ref{thm:bayes}, we have
$$F_n \leqq  nS_n + \overline{\lambda} \log n + O_p(1),$$
where
$$\overline{\lambda}=\frac{1}{2} \left[ (H-H_0)\min\{M\phi_U,N\phi_V \} +H_0 (M+N-1) \right].$$
Also, because of Theorem~\ref{KohjimaVBNMF}, 
$$\overline{F}_n = nS_n + \lambda_{vb} \log n + O_p(1)$$
holds, where
$$\lambda_{vb} = \begin{cases}
(H-H_0)(M\phi_U+N\phi_V) + \frac{1}{2}H_0(M+N), & \mbox{if } M\phi_U + N\phi_V<\frac{M+N}{2} \\
\frac{1}{2}H(M+N), & \mbox{otherwise}.
\end{cases}$$
We compute their difference
\begin{align*}
\overline{F}_n - F_n &= (\lambda_{vb}-\lambda)\log n + O_p(1) \\
&\geqq (\lambda_{vb} -\overline{\lambda}) \log n + O_p(1)
\end{align*}
both when $M\phi_U + N\phi_V<\frac{M+N}{2}$ and$M\phi_U + N\phi_V \geqq \frac{M+N}{2}$,
from which we obtain Theorem~\ref{thm:BvsVB}.
\end{proof}

\section{Discussion}

Here, we will discuss the results of this paper from three points of view.
After that, we will describe the numerical behavior of the theoretical result by conducting numerical experiments.

\subsection{Application to Model Selection}
First, we will explain an application of the Main Theorems. In this paper, we theoretically clarify the difference
between the variational free energy and the usual free energy in NMF.
From a practical point of view, the free energy $F_n$ can be calculated from the data; however, the entailed numerical integration is very hard and the sampling approximation, such an exchange Monte Carlo method, spends a long time finding $F_n$.
On the other hand, we can compute the variational one $\overline{F}_n$ more easily than $F_n$.
If the estimator of VBNMF is found, all we have to do is substitute it for the functional $\digamma$ whose minimum value is equal to $\overline{F}_n$.

It has not been clarified how much the variational free energy differs from the free energy; however, the Main Theorems give the lower bound. We can use the lower bound to approximate the free energy from the variational one.
Namely, when $\overline{F}_n$ is known, we have the approximation
$$F_n \approx \overline{F}_n - \underline{\lambda} \log n.$$
The usual VBNMF gives $F_n \approx\overline{F}_n$; here though, we can obtain a more accurate value\footnote{
Actually $\underline{\lambda}$ has the true non-negative rank $H_0$; however, in the same way as sBIC \cite{Drton}, 
we can avoid using the true knowledge by considering $H_0=0,\ldots,H$.}. In this way we should be able to
more accurately select the model in VBNMF by using $\overline{F}_n - \underline{\lambda} \log n$.

\subsection{Generalization Error}
Second, we describe the generalization error in NMF.
Theorem~\ref{thm:bayes} also gives an upper bound of the generalization error $G_n$ as well as the free energy $F_n$.
Generally speaking, the learning coefficients that control the asymptotic behavior of the $F_n$ and $G_n$ are the same
RLCTs \cite{SWatanabeBookMath}; hence, we can clarify both behaviors at once.
Since the situation in which the probability model $p(X|U,V)$ is a Poisson distribution and the prior $\varphi(U,V)$ is a gamma distribution is a case where the Gibbs sampling \cite{Cemgil} of NMF is performed, it can be regarded that not only $F_n$ but also $G_n$ are theoretically clarified when Gibbs sampling is applied.

By contrast, in Theorem~\ref{KohjimaVBNMF}, only the learning coefficient of the variational free energy $\overline{F}_n$ is determined.
This is because the learning coefficient of the variational generalization error is {\it not} equal to the one of $\overline{F}_n$.
Generally, in the case of VB, no zeta function is capable of uniformly handling $F_n$ and $G_n$, and the RLCT cannot obtain the learning coefficient\footnote{The learning coefficient of VB is not equal to RLCT.}.
For example, in VB of three-layered linear neural networks, the asymptotic behaviors are clarified not only with the variational free energy but also the variational generalization error \cite{Nakajima2007variational}, and their learning coefficients are different.
A linear neural network is also known as a reduced rank regression, a dimension reduction model, and the parameters are
equivalent to a matrix factorization model without a non-negative value constraint. In contrast in Bayesian inference in matrix factorization and NMF, the RLCT of matrix factorization is a lower bound for the RLCT of NMF, and it is known that the non-negative rank is dominant rather than the rank of the matrix in NMF, as described in \cite{nhayashi2, nhayashi5}.
Therefore, we cannot directly apply the results of linear neural networks to VBNMF.

In this way, theoretical generalization error in VB is rarely clarified, although that in Bayesian inference has been clarified with the free energy.
The Main Theorems show that Gibbs sampling is more reliable than VB, in the sense that it gives a theoretical guarantee not only about the free energy but also the generalization error. We can estimate the sample size
to achieve the needed inference performance and tune the hyperparameters.
Although various factors determine whether Gibbs sampling or VB is appropriate, our research can answer the question of whether or not the theoretical generalization error is clarified.

\subsection{Robustness on Probability Distributions}
Third, let us discuss the true distribution and the model of the data.
In this study, we consider the case in which the probability model $p(X|U,V)$ is a Poisson distribution and the prior $\varphi(U,V)$ is gamma distribution in the same way as in the derivation of the Gibbs sampling algorithm of NMF by Cemgil \cite{Cemgil}.
These assumptions are necessary for Gibbs sampling and the derivation of VB, but other models can be considered when using other MCMC methods. Is the main result applicable to these cases?

According to our prior research \cite{nhayashi2, nhayashi7},
several distributions satisfy the condition that the RLCT of NMF is equal to the absolute of the maximum pole of the following zeta function
\[
\zeta(z)=\int_{\mathrm{M}(M,H,K) } dU \int_{\mathrm{M}(H,N,K) } dV
 \Bigl(\| UV-U_0V_0 \|^2\Bigr)^z \varphi(U,V).
\]
Specifically, when the elements of the data matrix follow a normal distribution, a Poisson distribution, an exponential distribution,
or a Bernoulli distribution, 
the behavior of the free energy and the generalization error can be described using the same RLCT defined by the above zeta function.
In these previous studies, the prior distribution was limited to being positive and bounded, but when proving that the true distribution and the KL divergence of the probabilistic model have the same RLCT as the square norm error of the matrix, the prior distribution is arbitrary.

Therefore, if the prior distribution is a gamma distribution, the Main Theorems are valid not only when the probability model and the true distribution are Poisson distributions but also when they are normal distributions, exponential distributions, or Bernoulli distributions.
Indeed, if the hyperparameters are $\phi_U=\phi_V=1$, then the prior is strictly positive and bounded, and the upper bound equals
the result of \cite{nhayashi2}. Thus this study gives an extension to the case where the prior distribution is a gamma distribution of the main theorem of the previous work \cite{nhayashi2}.

\subsection{Experiment}

Here, we run numerical experiments to check the numeric behavior of the theoretical results.
Theorem~\ref{KohjimaVBNMF} gives the exact value of the learning coefficient $\lambda_{vb}$ of VBNMF and its validity was confirmed in Kohjima's previous research \cite{Kohjima2017phase}.
The core result is Theorem~\ref{thm:main}.
Therefore, we only has to run experiments for it; i.e., the RLCT $\lambda$ of Bayesian~NMF is calculated using Gibbs sampling. 



Let $\hat{\lambda}$ be the numerically calculated RLCT.
The widely applicable information criterion (WAIC) \cite{WatanabeAIC} is defined by the following random variable $W_n$:
$$W_n = T_n + V_n / n,$$
where $T_n$ is the empirical loss and $V_n$ is the functional variance:
\begin{align}
T_n &= -\frac{1}{n} \sum_{i=1}^n p^*(X_i) = -\frac{1}{n} \sum_{i=1}^n \mathbb{E}_w [p(X_i | w)], \\
V_n &= \sum_{i=1}^n \left[ \mathbb{E}_w [(\log p(X_i|w))^2] - \left\{ \mathbb{E}_w [ \log p(X_i|w)] \right\}^2 \right]
= \sum_{i=1}^n \mathbb{V}_w[\log p(X_i | w)].
\end{align}
Even if the posterior distribution cannot be approximated by any normal distribution (i.e., the model is singular),
the expected WAIC $\mathbb{E}[W_n]$ is asymptotically equal to the expected generalization loss $\mathbb{E}[G_n + S]$ \cite{WatanabeAIC};
$$\mathbb{E}[G_n + S] = \mathbb{E}[W_n] + o(1/n^2).$$
Moreover, the generalization error and $W_n - S_n$ have the same variance:
\begin{align}\label{var_RLCT}
G_n + W_n - S_n = 2\lambda /n + o_p(1/n).
\end{align}
Eq. (\ref{var_RLCT}) is useful for computing $\hat{\lambda}$ because the leading term $2\lambda /n$ is deterministic.
Nevertheless, the left hand side is probabilistic. This means that the needed number of simulation $D$ is smaller than that in the case using eq. (\ref{WatanabeBayesG}).

The method was as follows.
First, the training data $X^n$ was generated from the true distribution $q(X)$.
Second, the posterior distribution was calculated by using Gibbs sampling \cite{Cemgil}.
Third, $G_n$ and $W_n - S_n$ were computed by using the training data $X^n$ and the artificial test data $(X^*)^{n_T}$ generated from $q(X)$.
These three steps were repeated and each value of  $n(G_n + W_n - S_n)/2 was saved$.
After all repetitions have been completed,
$n(G_n + W_n - S_n)/2$ was averaged over the simulations. The average value was $\hat{\lambda}$.

The pseudo-code is listed in Algorithm \ref{exp_alg}, where $K$ is the sample size of the parameter subject to the posterior. We used the programing language named Julia 1.3.0 \cite{Bezanson2017julia} for this experiment.

\begin{algorithm}
\caption{How to Compute $\hat{\lambda}$}
\label{exp_alg}
\begin{algorithmic}
\REQUIRE $\phi = (\phi_U, \theta_U, \phi_V, \theta_V) >0$: the hyperparameters, \\
$U_0$: the true parameter matrix whose size is $(M, H_0)$,\\
$V_0$: the true parameter matrix whose size is $(H_0, N)$,\\
$\mathrm{GS}$: the Gibbs sampling function whose return value consists of the samples from the posterior.
\ENSURE The numerical computed RLCT $\hat{\lambda}$.
\STATE Allocate an array $\Lambda[D]$.
\FOR{$d=1$ to $D$}
\STATE Generate $X^n \sim q(X) = p(X|w_0)$, where $w_0 = (U_0, V_0)$.
\STATE Allocate arrays $\mathcal{U}[M, H, K]$ and $\mathcal{V}[H, N, K]$.
\STATE Get $\mathcal{U}, \mathcal{V} \leftarrow \mathrm{GS}(X^n, \phi)$.
\STATE Generate $(X^*)^{n_T} \sim q(X)$.
\STATE Calculate $G_n \approx \frac{1}{n_T} \sum_{t=1}^{n_T} \log \frac{q(X^*_t)}{\mathbb{E}_w [p(X^*_t | w)]}$,
$S_n = -\frac{1}{n} \sum_{i=1}^n \log q(X_i)$,
\STATE and $W_n \approx -\frac{1}{n} \sum_{i=1}^n \mathbb{E}_w [p(X_i | w)] + \frac{1}{n} \sum_{i=1}^n \mathbb{V}_w[\log p(X_i | w)]$,
\STATE $\quad$ where $\mathbb{E}_w[ f(w) ] \approx \frac{1}{K} \sum_{k=1}^K f(w_k)$ and $w_k = (\mathcal{U}[:,:,k], \mathcal{V}[:,:,k])$.
\STATE Save $\Lambda[d] \leftarrow n(G_n + W_n - S_n)/2$.
\ENDFOR
\STATE Calculate $\hat{\lambda} = \frac{1}{D} \sum_{d=1}^D \Lambda[d]$.
\end{algorithmic}
\end{algorithm}


We set 
$M=N=4$, $H=2$, $H_0=1$, and $n_T = 100n$.
To examine the behavior given different sample sizes, we set $n=500, 1000$ and $K=1000, 2000$, respectively.
To decrease the probabilistic effect of eq.~(\ref{var_RLCT}), we conducted the simulations twenty times, i.e., $D=20$.

The hyperparameters were set to $\theta_U = \theta_V =1$ and  $(\phi_U, \phi_V) = (0.25, 0.25), (0.5, 0.5), (1, 1), (2, 2)$.
We chose four pairs of $(\phi_U, \phi_V)$ in view of a theoretical point: the critical line of the learning coefficient in VBNMF.
Under the condition $M=N$, $\phi_U + \phi_V = 1$ is the phase transition line (see Theorem~\ref{KohjimaVBNMF}). 
Each point on the straight line $\phi_U = \phi_V$ is characterized as follows. $(0.25, 0.25)$ is before the phase transition line, and $(0.5, 0.5)$ is a phase transition point. $(1,1)$ is a case where the prior distribution $\varphi(U,V)$ is strictly positive and bounded, which was treated in the previous research \cite{nhayashi2, nhayashi5}. $(2,2)$ is beyond the critical line.

In the Gibbs sampling, we had to conduct a burn-in to decrease the effect of the initial values and thin the samples in order to break the correlations.
The sample size for the burn-in was $20000$, while the sample size for the thinning was $20$; thus, the sample sizes of the parameter was $20000 + 20K = 40000$ and $60000$ $(K=1000$ and $2000)$ and we used the $(20000 + 20k)$-th sample as the entry of $\mathcal{U}[:, :, k]$ and $\mathcal{V}[:, :, k]$ for $k=1$ to $K$.


\begin{table}[h]\centering
\caption{Numerically Calculated and Theoretical Values of the Learning Coefficients}
  \begin{tabular}{|c|c|c|c|} \hline
   \multicolumn{2}{|l|}{Hyperparam. / Coeff. }
      & \multicolumn{1}{c|}{$n=500$} & \multicolumn{1}{c|}{$n=1000$} \\ \hline
      & $\lambda_{vb}$ & 6 & 6 \\ 
    $\phi_U{=}\phi_V{=}0.25$ &  $\overline{\lambda}$ & 4 & 4 \\ 
    $\theta_U{=}\theta_V{=}1$ & $\hat{\lambda}$ & $3.74 \pm 0.0412$ & $3.73 \pm 0.0508$ \\ 
      & $\overline{\lambda}-\hat{\lambda}$ & $0.260 \pm 0.0412$ & $0.268 \pm 0.0508$ \\ \hline
      & $\lambda_{vb}$ & 8 & 8 \\ 
    $\phi_U{=}\phi_V{=}0.5$ &  $\overline{\lambda}$ &  9/2 & 9/2 \\ 
    $\theta_U{=}\theta_V{=}1$ &  $\hat{\lambda}$ & $4.05 \pm 0.0706$ & $4.15 \pm 0.0842$ \\ 
      & $\overline{\lambda}-\hat{\lambda}$ & $0.450 \pm 0.0706$ & $0.346 \pm 0.0842$ \\ \hline
      & $\lambda_{vb}$ &  8 & 8 \\ 
    $\phi_U{=}\phi_V{=}1$ &  $\overline{\lambda}$ & 11/2 & 11/2 \\ 
    $\theta_U{=}\theta_V{=}1$ & $\hat{\lambda}$ & $4.52 \pm 0.0492$ & $4.54 \pm 0.0628$ \\ 
      & $\overline{\lambda}-\hat{\lambda}$ & $0.976 \pm 0.0492$ & $0.965 \pm 0.0628$ \\ \hline
      &   $\lambda_{vb}$ &  8 & 8 \\ 
    $\phi_U{=}\phi_V{=}2$ &  $\overline{\lambda}$ & 15/2 & 15/2 \\ 
    $\theta_U{=}\theta_V{=}1$ & $\hat{\lambda}$ & $4.76 \pm 0.0454$ & $4.79 \pm 0.0477$ \\ 
      & $\overline{\lambda}-\hat{\lambda}$ & $2.74 \pm 0.0454$ & $2.71 \pm 0.0477$ \\ \hline
  \end{tabular}\label{exp-result}
\end{table}

\newpage

The experimental results are shown in Table~\ref{exp-result}.
The symbol $\overline{\lambda}$ denotes the theoretical upper bound of the RLCT $\lambda$ in Theorem~\ref{thm:main}.
There are columns for each sample size, and each row contains the hyperparameter (Hyperparam.) and the learning coefficient (Coeff.). The experimental values have three significant digits.


As shown in Table~\ref{exp-result}, all numerically calculated values are smaller than the theoretical upper bound,
thus, the Theorem~\ref{thm:main} is consistent with the experimental result.
Since $\lambda_N$ is larger in the case $\phi_U = \phi_V = 2$ than in the case $\phi_U=\phi_V=1$,
it is conjectured that the larger $\phi_U$ and $\phi_V$ are, the larger $\lambda$ will be,
while $\lambda_{vb}$ saturates to $H(M+N)/2=8$ (owing to Theorem~\ref{KohjimaVBNMF}) and the upper bound of $\lambda$ looks less tight than that in the other case.
In the case $\phi_U=\phi_V=0.5$, when the hyperparameter is on the phase transition line, $\lambda_N$ fluctuates more than in the other case.
We can see that learning is unstable on the critical line.
Hence, the hyperparameters should be set to avoid the neighborhood of the phase transition line.

\section{Conclusion}
We gave an upper bound of the real log canonical threshold for non-negative matrix factorization whose priors are gamma distributions and described theoretical applications to Bayesian and variational inference. Owing to the Main Theorems, the variational approximation error, i.e., the difference between the variational free energy and the free energy, can be quantitatively evaluated and the difference depends on the true non-negative rank and the hyperparameters of the gamma distributions. The numerical results are consistent with the theoretical results and suggest the exact values and the stability of learning. Future tasks include conducting large-scale experiments and clarifying the exact value of the real log canonical threshold.



\appendix
\section{Proofs of the Lemmas}\label{pf-lem}

Let $K(U,V)$ be the KL divergence between the true distribution $q(X)$ and the model $p(X|U,V)$
and $\Phi(U,V)=\|UV-U_0V_0\|^2$.
In the same way as \cite{nhayashi2}, $K(U,V) \simeq \Phi(U,V)$ follows.
Thus we consider the zero points of $\Phi(U,V)$ and $\varphi(U,V)$ to calculate the RLCT.

If $\Phi^{-1}(0) \ne \emptyset$, $\varphi^{-1}(0) \ne \emptyset$, and $\Phi^{-1}(0) \cap \varphi^{-1}(0)=\emptyset$,
then $\varphi(U,V)$ does not affect the maximum pole of the zeta function.
The prior $\varphi(U,V)$ is a gamma distribution, hence, 
$$\varphi^{-1}(0)=\left(\bigcup_{i=1}^M \bigcup_{k=1}^H \{(U,V)\mid u_{ik}=0\}\right) \cup \left( \bigcup_{k=1}^H \bigcup_{j=1}^N \{(U,V)\mid v_{kj}=0\}\right).$$
In the case of Lemma \ref{lemH0}, the set $\{ (U,V) \mid \|UV\|^2=0\}$ has intersections with $\varphi^{-1}(0)$.
However, in the case of Lemma \ref{lemH1}, the matrices $(U,V)\in \Phi^{-1}(0)$ cannot have zero elements.
Therefore, Lemma \ref{lemH1} can be proved in the same way as \cite{nhayashi2},
as can Lemma \ref{lemH}.

Here, we prove Lemma \ref{lemH0}.
\begin{proof}[Proof of Lemma \ref{lemH0}]
We will consider the simultaneous resolution $\Phi(U,V)=0$ and $\varphi(U,V)=0$,
since $\{(U,V) \mid \|UV\|^2=0\}$ has intersections with $\varphi^{-1}(0)$.

The zeta function is equal to
$$\zeta(z)=\iint dUdV\left(\| UV \|^2\right)^z \mathrm{Gam}(U|\phi_U,\theta_U)\mathrm{Gam}(V|\phi_V,\theta_V).$$

Since the elements of the matrices are non-negative, the inequality
$$\sum_{k=1}^H u_{ik}^2 v_{kj}^2 \leqq \Biggl( \sum_{k=1}^H u_{ik} v_{kj} \Biggr)^2 \leqq H \sum_{k=1}^H u_{ik}^2 v_{kj}^2$$
holds. Thus, we have
$$\sum_{i=1}^M \sum_{j=1}^N \sum_{k=1}^H u_{ik}^2 v_{kj}^2 \leqq \| UV \|^2 \leqq H \sum_{i=1}^M \sum_{j=1}^N \sum_{k=1}^H u_{ik}^2 v_{kj}^2.$$

As an RLCT is not changed by any constant coefficient
and it is order isomorphic; $\exists (c_1, c_2) \in \mathbb{R}^2 \ {\rm s.t.} \ c_1 F \leqq G \leqq c_2 F \Rightarrow F \simeq G$,
all we have to do is to calculate the RLCT of 
$$\sum_{i=1}^M \sum_{j=1}^N \sum_{k=1}^H u_{ik}^2 v_{kj}^2 
=\sum_{k=1}^H \Biggl(\sum_{i=1}^M u_{ik}^2 \Biggr) \Biggl( \sum_{j=1}^N v_{kj}^2 \Biggr).$$

The RLCT $\lambda$ becomes a sum of RLCTs about $k$. For each $k$, 
we blow-up the variables $(u_{ik})$, $(v_{kj})$ each other.
Simultaneously, we perform blow-ups of $\prod_{k=1}^H (\prod_{i=1}^M u_{ik}^{\phi_U-1} \prod_{j=1}^N v_{kj}^{\phi_V-1})$ and the determinant $\det J$ of their Jacobi matrix $J$; $\det J$ is called the Jacobian.

Let $A$ be either $U$ or $V$ and $\mathfrak{b}_{rc}[A]: (U,V) \mapsto (U', V')$ be a blow-up such that
the $(i,k)$th-entry of $U'$ and the $(k,j)$th-entry of $V'$ are equal to $U'_{ik}$ and $V'_{kj}$:
$$\begin{cases}
	U'_{ik} = \begin{cases}
		u_{ik} & (i=r \ {\rm and} \ k=c) \ {\rm or} \ k \ne c \\
		u_{rc}u_{ik} & i\ne r \ {\rm and} \ k=c \\
	\end{cases}, \quad V'_{kj}=v_{kj} & A=U, \\
	U'_{ik} = u_{ik}, \quad V'_{kj} = \begin{cases}
		v_{kj} & k=r \ {\rm and} \ j=c \\
		v_{rc}v_{kj} & {\rm otherwise}
	\end{cases} & A=V
\end{cases}.$$

For example, let $M=N=H=2$ and apply $\mathfrak{b}_{11}[U]$ to
$$\|UV\|^2 \simeq \sum_{k=1}^H \Biggl(\sum_{i=1}^M u_{ik}^2 \Biggr) \Biggl( \sum_{j=1}^N v_{kj}^2 \Biggr)
\ \mbox{and} \
\prod_{k=1}^2 \Biggl(\prod_{i=1}^2 u_{ik}^{\phi_U-1} \prod_{j=1}^2 v_{kj}^{\phi_V-1}\Biggr)
.$$
Thus, we have
$$U' =\left(
\begin{matrix}
	u_{11} & u_{12} \\
	u_{11}u_{21} & u_{22}
\end{matrix}
\right), \quad V'=\left(
\begin{matrix}
	v_{11} & v_{12} \\
	v_{21} & v_{22}
\end{matrix}
\right)$$
and
\begin{gather*}
\sum_{k=1}^2 \Biggl(\sum_{i=1}^2 u_{ik}^2 \Biggr) \Biggl( \sum_{j=1}^2 v_{kj}^2 \Biggr) \mapsto
u_{11}^2(1+u_{21}^2)(v_{11}^2 + v_{12}^2) + (u_{12}^2 + u_{22}^2)(v_{21}^2 + v_{22}^2), \\
\prod_{k=1}^2 \Biggl(\prod_{i=1}^2 u_{ik}^{\phi_U-1} \prod_{j=1}^2 v_{kj}^{\phi_V-1}\Biggr) \mapsto
u_{11}^{2(\phi_U-1)}u_{21}^{\phi_U-1}v_{11}^{\phi_V-1}v_{12}^{\phi_V-1}u_{12}^{\phi_U-1}u_{22}^{\phi_U-1}v_{21}^{\phi_V-1}v_{22}^{\phi_V-1}.
\end{gather*}
The Jacobi matrix is an $8 \times 8$ matrix as follows:
$$J=\frac{\partial (U',V')}{\partial(U,V)}
=\left(
\begin{matrix}
1 & 0 & u_{21} & 0 & 0 & 0 & 0 & 0 \\
0 & 1 & 0 & 0 & 0 & 0 & 0 & 0 \\
0 & 0 & u_{11} & 0 & 0 & 0 & 0 & 0 \\
0 & 0 & 0 & 1 & 0 & 0 & 0 & 0 \\
0 & 0 & 0 & 0 & 1 & 0 & 0 & 0 \\
0 & 0 & 0 & 0 & 0 & 1 & 0 & 0 \\
0 & 0 & 0 & 0 & 0 & 0 & 1 & 0 \\
0 & 0 & 0 & 0 & 0 & 0 & 0 & 1
\end{matrix}
\right)$$
and its determinant is $\det J=(u_{11})^{1} = u_{11}$. 

We choose $k \in \{1,\ldots,H\}$ arbitrarily and fix it. In the general case, because of the symmetry of the variables, only the blow-ups
$\mathfrak{b}_{1k}[U]$ and $\mathfrak{b}_{k1}[V]$ should be treated. Hence, we have
\begin{gather*}
\Biggl(\sum_{i=1}^M u_{ik}^2 \Biggr) \Biggl( \sum_{j=1}^N v_{kj}^2 \Biggr) \mapsto
u_{1k}^2v_{k1}^2\left(1+\sum_{i=2}^M u_{ik}^2\right)\left(1+\sum_{j=2}^N v_{kj}^2\right), \\
\prod_{i=1}^M u_{ik}^{\phi_U-1} \prod_{j=1}^N v_{kj}^{\phi_V-1} \mapsto
u_{1k}^{M\phi_U-M}v_{k1}^{N\phi_V-N}\prod_{i=2}^M u_{ik}^{\phi_U-1} \prod_{j=2}^N v_{kj}^{\phi_V-1}.
\end{gather*}
The Jacobian $(\det J)_{k}$ is equal to
$(\det J)_{k} = u_{1k}^{M-1} v_{k1}^{N-1}$.
The term $\left(1+\sum_{i=2}^M u_{ik}^2\right)\left(1+\sum_{j=2}^N v_{kj}^2\right)$ is strictly positive;
thus, we should consider the maximum pole of the following meromorphic function:
\begin{align*}
\tilde{\zeta}(z) &= \iint dUdV (u_{1k}^2v_{k1}^2)^z
\left(u_{1k}^{M\phi_U-M}v_{k1}^{N\phi_V-N}\prod_{i=2}^M u_{ik}^{\phi_U-1} \prod_{j=2}^N v_{kj}^{\phi_V-1}\right) |(\det J)_{k}| \\
&= C \iint du_{1k}dv_{k1} u_{1k}^{2z} v_{k1}^{2z}u_{1k}^{M\phi_U-M}v_{k1}^{N\phi_V-N} u_{1k}^{M-1} v_{k1}^{N-1}\\
&= C \iint du_{1k}dv_{k1} u_{1k}^{2z+M\phi_U-1} v_{k1}^{2z+N\phi_V-1} \\
&= D \frac{1}{2z + M\phi_U} \times \frac{1}{2z + N\phi_V},
\end{align*}
where $C$ and $D$ are positive constants.
The poles are $z=-M\phi_U/2, -N\phi_V/2$; therefore, the RLCT $\lambda_k$ is
$$\lambda_k=\frac{\min\{M\phi_U, N\phi_V\}}{2}.$$

Let $U_k$ be $(u_{ik})_{i=1}^M$, $V_k$ be $(v_{kj})_{j=1}^N$, and $\Phi_k(U_k,V_k)$ be $\Biggl(\sum_{i=1}^M u_{ik}^2 \Biggr) \Biggl( \sum_{j=1}^N v_{kj}^2 \Biggr)$ for each $k$.
The RLCT $\lambda$ becomes the sum over $k$;
$$\lambda = \sum_{k=1}^H \lambda_k,$$
since $\Phi(U,V) \simeq \sum_{k=1}^H \Phi_k(U_k,V_k)$ and the RLCT of $\Phi_k$ is $\lambda_k$.
Thus, we have
$$\lambda = \frac{H \min\{M\phi_U, N\phi_V\}}{2}.$$
\end{proof}

\section{Proof of Theorem~\ref{thm:main}}\label{pf-main}

\begin{proof}[Proof of Theorem\ref{thm:main}]

From the zeta function in Definition~\ref{def:NMFRLCT}, we consider the zero points of
the algebraic set $\{(U,V) \mid \| UV-U_0V_0 \|^2\varphi(U,V)=0\}$.
Then, we have
\begin{eqnarray*}
  && \| UV-U_0V_0 \|^2  \\
  &=& \sum_{i=1}^M \sum_{j=1}^N (u_{i1}v_{1j} + ... + u_{iH}v_{Hj} 
  - u^0_{i1}v^0_{1j} - ... - u^0_{iH_0}v^0_{H_0j})^2 \\
  &=& \sum_{i=1}^M \sum_{j=1}^N \Biggl( \sum_{k=1}^H u_{ik}v_{kj} - \sum_{k=1}^{H_0} u^0_{ik}v^0_{kj} \Biggr)^2 \\
  &=& \sum_{i=1}^M \sum_{j=1}^N \Biggl( \sum_{k=1}^{H_0} (u_{ik}v_{kj} - u^0_{ik}v^0_{kj}) 
+ \sum_{k=H_0 +1}^H u_{ik}v_{kj} \Biggr)^2 \\
  & \leqq & C \sum_{i=1}^M \sum_{j=1}^N 
\left( \sum_{k=1}^{H_0}  (u_{ik}v_{kj} - u^0_{ik}v^0_{kj})^2 + \sum_{k=H_0 +1}^H u_{ik}^2 v_{kj}^2 \right)  \ \rm{for} \ \exists C>0(\rm{const.}) \\
  & \sim & \sum_{i=1}^M \sum_{j=1}^N 
\left( \sum_{k=1}^{H_0}  (u_{ik}v_{kj} - u^0_{ik}v^0_{kj})^2 + \sum_{k=H_0 +1}^H u_{ik}^2 v_{kj}^2 \right) \\
  &=& \sum_{k=1}^{H_0} \Biggl( \sum_{i=1}^M \sum_{j=1}^N (u_{ik}v_{kj} - u^0_{ik}v^0_{kj})^2 \Biggr) 
+\sum_{k=H_0 +1}^H \Biggl( \sum_{i=1}^M \sum_{j=1}^Nu_{ik}^2 v_{kj}^2 \Biggr) \\
  &\sim& \sum_{k=1}^{H_0}
\left\|
 u_k (v_k)^T - u^0_k (v^0_k)^T
\right\|^2
+ \left\|
  \left(
    \begin{array}{ccc}
  u_{1(H_0 +1)} & \ldots & u_{1H} \\
  \vdots & \ddots & \vdots \\
  u_{M(H_0 +1)} & \ldots & u_{MH} \\
    \end{array}
  \right) 
  \left(
   \begin{array}{ccc}
  v_{(H_0 +1)1} & \ldots & v_{(H_0 +1)N} \\
  \vdots & \ddots & \vdots \\
  v_{H1} & \ldots & v_{HN} \\
    \end{array}
  \right)
\right\|^2.
\end{eqnarray*}

Write the first and second terms above as
$$K_1(U,V)=\sum_{k=1}^{H_0}\left\| u_k (v_k)^T - u^0_k (v^0_k)^T\right\|^2$$
and
$$K_2(U,V)= 
\left\|
  \left(
    \begin{array}{ccc}
  u_{1(H_0 +1)} & \ldots & u_{1H} \\
  \vdots & \ddots & \vdots \\
  u_{M(H_0 +1)} & \ldots & u_{MH} \\
    \end{array}
  \right) 
  \left(
   \begin{array}{ccc}
  v_{(H_0 +1)1} & \ldots & v_{(H_0 +1)N} \\
  \vdots & \ddots & \vdots \\
  v_{H1} & \ldots & v_{HN} \\
    \end{array}
  \right)
\right\|^2,
$$
respectively. Because the prior $\varphi(U,V)\geqq 0$, all we have to do is find the RLCT of
$$K_1(U,V) \varphi(U,V) + K_2(U,V) \varphi(U,V)$$
to derive an upper bound.

Let $\overline{\lambda}$ be the RLCT of the right side, $\lambda_1$ be an RLCT of first term in the right side, and $\lambda_2$ be the RLCT of the second one. If $\varphi(U,V)=0$, then $K_1(U,V) \ne 0$, and, thus, the first prior term can be ignored when calculating the RLCT; we have
$$K_1(U,V) + K_2(U,V) \varphi(U,V)$$
Because the variables are independent and RLCTs are order isomorphic, 
\begin{eqnarray}
  \label{rlctineq}
  \lambda \leqq \overline{\lambda} = \lambda_1 + \lambda_2.
\end{eqnarray}
Since $K_1(U,V)$ corresponds to a proof of Lemma \rm{\ref{lemH}} in the case that $H$ is replaced by $H_0$, we have
$$\lambda_1 = H_0 \frac{M+N-1}{2}.$$
In contrast, $K_2(U,V)$ corresponds to Lemma \rm{\ref{lemH0}} when $H$ is replaced with $H-H_0$. This leads to
$$\lambda_2 = \frac{(H-H_0)\min\{M\phi_U,N\phi_V \}}{2}.$$
Using the above two equalities for inequality(\ref{rlctineq}), we find that
\begin{eqnarray*}
\lambda &\leqq& \overline{\lambda} \\
&=& \lambda_1 + \lambda_2\\
&=& H_0 \frac{M+N-1}{2} + \frac{(H-H_0)\min\{M\phi_U,N\phi_V \}}{2}. \\
\end{eqnarray*}
Therefore, the RLCT $\lambda$ satisfies the following inequality:
$$\lambda \leqq \frac{1}{2} \left[ (H-H_0)\min\{M\phi_U,N\phi_V\} +H_0 (M+N-1) \right].$$
\end{proof}

\section{Proofs of Theorem~\ref{thm:bayes} and \ref{thm:BvsVB}}\label{pf-main2}

\begin{proof}[Proof of Theorem~\ref{thm:bayes} and \ref{thm:BvsVB}]

First, we prove Theorem~\ref{thm:bayes}.
Owing to the equality (\ref{WatanabeBayesF}) and (\ref{WatanabeBayesG}),
we have
\begin{gather*}
F_n = nS_n + \lambda \log n -(m-1)\log\log n+ O_p(1), \\
\mathbb{E}[G_n] =\frac{\lambda}{n}+o \left(\frac{1}{n} \right).
\end{gather*}
On account of Theorem~\ref{thm:main}, 
$$\lambda \leqq \frac{1}{2} \left[ (H-H_0)\min\{M\phi_U,N\phi_V \} +H_0 (M+N-1) \right].$$
holds. 
Theorem~\ref{thm:bayes} follows from the above three formulas and $m \geqq 1$:
\begin{gather*}
F_n \leqq  nS_n + \frac{1}{2} \left[ (H-H_0)\min\{M\phi_U,N\phi_V \} +H_0 (M+N-1) \right] \log n + O_p(1), \\
\mathbb{E}[G_n] \leqq  \frac{1}{2n} \left[ (H-H_0)\min\{M\phi_U,N\phi_V \} +H_0 (M+N-1) \right]
+o\left(\frac{1}{n}\right).
\end{gather*}

Next, we prove Theorem~\ref{thm:BvsVB}.
Let $\overline{\lambda}$ be the upper bound of $\lambda$ in Theorem~\ref{thm:main}:
$$\overline{\lambda}=\frac{1}{2} \left[ (H-H_0)\min\{M\phi_U,N\phi_V \} +H_0 (M+N-1) \right]$$
In the same way as the above, we have
$$F_n \leqq  nS_n + \overline{\lambda} \log n + O_p(1).$$
Also, because of Theorem~\ref{KohjimaVBNMF}, 
$$\overline{F}_n = nS_n + \lambda_{vb} \log n + O_p(1)$$
holds, and we have
$$\lambda_{vb} = \begin{cases}
(H-H_0)(M\phi_U+N\phi_V) + \frac{1}{2}H_0(M+N), & \mbox{if } M\phi_U + N\phi_V<\frac{M+N}{2} \\
\frac{1}{2}H(M+N), & \mbox{otherwise}.
\end{cases}$$
Thus, we compute their difference,
\begin{align*}
\overline{F}_n - F_n &= (\lambda_{vb}-\lambda)\log n + O_p(1) \\
&\geqq (\lambda_{vb} -\overline{\lambda}) \log n + O_p(1).
\end{align*}
When $M\phi_U + N\phi_V<\frac{M+N}{2}$, we have
\begin{align*}
\lambda_{vb} - \overline{\lambda} &= (H-H_0)(M\phi_U+N\phi_V) + \frac{1}{2}H_0(M+N)
- \frac{1}{2} \left[ (H-H_0)\min\{M\phi_U,N\phi_V \} +H_0 (M+N-1) \right] \\
&=(H-H_0)\left[M\phi_U+N\phi_V -\frac{1}{2}\min\{M\phi_U,N\phi_V \} \right]
+\frac{1}{2}H_0(M+N-M-N+1) \\
&=\frac{1}{2}[(H-H_0)(M\phi_U+N\phi_V + M\phi_U+N\phi_V -\min\{M\phi_U,N\phi_V \} )+H_0] \\
&=\frac{1}{2}[(H-H_0)(M\phi_U+N\phi_V+\max\{M\phi_U,N\phi_V\})+H_0].
\end{align*}
On the other hand, if $M\phi_U + N\phi_V \geqq \frac{M+N}{2}$,
then
\begin{align*}
\lambda_{vb} - \overline{\lambda} &= \frac{1}{2}H(M+N)
- \frac{1}{2} \left[ (H-H_0)\min\{M\phi_U,N\phi_V \} +H_0 (M+N-1) \right] \\
&=\frac{1}{2}H(M+N)
- \frac{1}{2} (H-H_0)\min\{M\phi_U,N\phi_V \} -\frac{1}{2}H_0 (M+N) + \frac{1}{2}H_0 \\
&=\frac{1}{2}(H-H_0)(M+N) - \frac{1}{2} (H-H_0)\min\{M\phi_U,N\phi_V \} + \frac{1}{2}H_0 \\
&=\frac{1}{2}[(H-H_0)(M+N-\min\{M\phi_U,N\phi_V\})+H_0].
\end{align*}
This completes the proof of Theorem~\ref{thm:BvsVB}.
\end{proof}

\section*{Acknowledgments}

This research was partially supported by NTT DATA Mathematical Systems Inc.
We thank Professor Sumio Watanabe of Tokyo Institute of Technology for advice.
The author would like to express his appreciation to the editor and the reviewers for pointing out ways to improve this paper.



\bibliography{../../bib/bibs-full}
\bibliographystyle{elsarticle-num}






\end{document}